 \newenvironment{dedication}
        {\vspace{6ex}\begin{quotation}\begin{center}\begin{em}}
        {\par\end{em}\end{center}\end{quotation}}
\numberwithin{equation}{section}
\newtheorem{theorem}{Theorem}[section]
\newtheorem{lemma}[theorem]{Lemma}
\newtheorem{corollary}[theorem]{Corollary}
\theoremstyle{definition}
\newtheorem{definition}[theorem]{Definition}
\newtheorem{def-prop}[theorem]{Definition-Proposition}
\newtheorem{remark}[theorem]{Remark}
\newtheorem{example}[theorem]{Example}
\newtheorem*{Mysketch}{Sketch of proof} 
  {\pushQED{\qed}\begin{Mysketch}}
  {\popQED\end{Mysketch}}
\DeclareMathOperator{\Ass}{Ass}
\DeclareMathOperator{\Min}{Min}
\DeclareMathOperator{\depth}{depth}
\DeclareMathOperator{\ini}{in_{>}}
\DeclareMathOperator{\pol}{pol}
\def\NN{\mathbb{N}}
\def\G{\mathcal{G}}
\def\P{\mathcal{P}}
\newcommand{\lcm}[1]{\ensuremath{{\rm{lcm}}{#1}}}
\begin{document}

\title{Regular sequences on squares of monomial ideals}

\author{Louiza Fouli}
\address{Department of Mathematical Sciences \\
New Mexico State University\\
P.O. Box 30001 \\
Department 3MB \\
Las Cruces, NM 88003}
\email{lfouli@nmsu.edu}
\urladdr{http://www.web.nmsu.edu/~lfouli}

\author{T\`ai Huy  H\`a}
\address{Department of Mathematics \\
Tulane University \\
6823 St. Charles Avenue \\
New Orleans, LA 70118}
\email{tha@tulane.edu}
\urladdr{http://www.math.tulane.edu/~tai/}

\author{Susan Morey}
\address{Department of Mathematics \\
Texas State University\\
601 University Drive\\
San Marcos, TX 78666}
\email{morey@txstate.edu}
\urladdr{https://www.math.txstate.edu/about/people/faculty/morey.html}

\keywords{regular sequence, depth, projective dimension, monomial ideal, edge ideal, power of ideal}
\subjclass[2010]{13C15, 13D05, 05E40, 13P10}

\begin{abstract}
We use initially regular sequences that consist of linear sums to explore the depth of $R/I^2$, when $I$ is a monomial ideal in a polynomial ring $R$.
We give conditions under which these linear sums form regular or initially regular sequences on $R/I^2$. We then obtain a criterion for when $\depth R/I^2>1$ and a lower bound on $\depth R/I^2$.
\end{abstract}

\maketitle

\begin{dedication}
\vspace{-1cm}
{Dedicated to Rafael H. Villarreal on the occasion of his 70th birthday.}
\end{dedication}

\section{Introduction}\label{intro}

Let $R$ be a polynomial ring over an arbitrary field $k$ and let $I \subseteq R$ be a homogeneous ideal. In \cite{FHM}, we introduced a new notion, which we shall recall later, of an \emph{initially regular sequence} with respect to $I$, whose length gives a lower bound for the depth of $R/I$.  Regular sequences are quite difficult to produce in practice. Using initially regular sequences one can reduce to the monomial case and use combinatorial structures associated to monomial ideals to determine a single regular element, and a sequence that approximates a regular sequence. Moreover, a focus of \cite{FHM} is determining initially regular sequences directly from a single combinatorial structure associated to the first initial ideal. Using the combinatorics of the first initial ideal, we gave a concrete construction of specific sequences of linear forms that are initially regular and discussed situations in which these sequences are also regular. The use of initially regular sequences allows us to provide a combinatorial lower bound for the depth of the ideal as well as to determine concrete regular sequences in special cases. In this paper, we extend the study in \cite{FHM} a step further and examine when the constructed sequences of linear forms are initially regular or regular sequences on $R/I^t$ with $t\ge 2$. This leads to a criterion for when $\depth R/I^2>1$, partially answering a question raised by Terai and Trung in \cite{TT}.

To be more precise, let us first recall the notion of initially regular sequences from \cite{FHM}.

\begin{definition}[\cite{FHM}]
		Given a term order $>$, a sequence of nonconstant polynomials $f_1, \dots, f_s$ is said to be an \emph{initially regular sequence} on $R/I$ if for each $i = 1, \dots, s$, $f_i$ is a regular element on $R/I_i$, where $I_1 = \ini(I)$ and $I_i = \ini(I_{i-1}, f_i)$ for $i>1$.
\end{definition}	

In this paper, we focus on a particular setting where each linear form in the sequence associated to $I$ is a sum of the form $f = b_0 + b_1 + \dots + b_t$, where $b_i$ are variables in the polynomial ring, $b_0 > b_1> \ldots > b_t$ and
\begin{center}
\begin{minipage}[c]{0.1\textwidth}
$(\star)$ $\left\{ \begin{array}{llll} \\ \\ \\ \\ \end{array}\right.$
\end{minipage}
\hspace*{-1cm}
\begin{minipage}[c]{0.9\textwidth}
\begin{enumerate}
    \item the maximum power of $b_i$ appearing in the minimal generators of $\ini(I)$ is at most 1 for all $0 \le i \le t$; and
    \item if $M$ is a minimal generator of $\ini(I)$ and $b_0$ divides $M$, then there exists a $i \ge 1$ such that $b_i$ divides $M$.
\end{enumerate}
\end{minipage}
\end{center}

Condition $(\star)$ stems from the observation that if you select any vertex of a graph $G$ and add to it all its neighbors, the resulting set of vertices will not be contained in any minimal vertex cover, and thus the sum will be regular on $R/I$ when $I$ is the edge ideal associated to $G$. To better understand this definition, consider the following example. 

\begin{example}
Let $R=\mathbb{Q}[a,b,c,d,e,f,g]$ and let $I=(abc, acd, ae, ef, fg)$ be the ideal corresponding to the hypergraph $G$ depicted below.

		\begin{tikzpicture}
			
			\tikzstyle{point}=[circle,thick,draw=black,fill=black,inner sep=0pt,minimum width=4pt,minimum height=4pt]

			\node (a)[point, label={[xshift=-0.3cm, yshift=-0.3 cm] $\bf{a}$}] at (1,1) {};
			
			\node (b)[point,label={[xshift=0.0cm, yshift=-0.1cm] $\bf{b}$}] at (2,2) {};
			
			\node (c)[point,label={[xshift=-0.0cm, yshift=-0.1cm] $\bf{c}$}] at (1,2) {};
			
			\node (d)[point,label={[xshift=0.0cm, yshift=-0.1cm] $\bf{d}$}] at (0,2) {};
			
			\node (e)[point,label={[xshift=0.0cm, yshift=-0.1cm] $\bf{e}$}] at (2.5,1) {};
			
			\node (f)[point,label={[xshift=0.0cm, yshift=-0.1cm] $\bf{f}$}] at (4,1) {};
			
			\node (g)[point,label={[xshift=0.0cm, yshift=-0.1cm] $\bf{g}$}] at (5.5,1) {};
					
			\draw (a.center) -- (d.center);
			\draw (a.center) -- (e.center);
			\draw (e.center)--(g.center);
			\draw (a.center) -- (b.center);
			\draw (a.center) -- (c.center);
			\draw (c.center) -- (b.center);
			
			\draw[pattern=vertical lines] (a.center) -- (b.center) -- (c.center) -- cycle;
			
			\draw[pattern=horizontal lines] (a.center) -- (d.center) -- (c.center) -- cycle;
			
		\end{tikzpicture}
		
Fix the term order $g>f>d>a>b>c>e$, for instance. The elements $a+c+e$, $d+a$, and $g+f$ each satisfy condition $(\star)$. Note that condition $(1)$ of $(\star)$ holds automatically since $I$ is a square-free monomial ideal. For condition $(2)$, using $a$ in the role of $b_0$, note that either $c$ or $e$ divides each of three generators divisible by $a$. Using $g$ in the role of $b_0$ requires $f$ to cover the single generator divisible by $g$. It can be seen from results in \cite{FHM} that $g+f, d+a, a+c+e$ forms a regular and initially regular sequence on $R/I$. Moreover, on can check using a computer system such as Macaulay2 \cite{M2}, that $\depth(R/I)=3$.
\end{example}

In the study of the depth function, it is desirable to know the depth of powers of an ideal instead of just the depth of the ideal itself (cf. \cite{BHH2014, FHM2020, FM, HNTT, HH2005, HV2013, Morey, Trg2016}). This motivates the following natural question: when does a linear form or a sequence of linear forms constructed in \cite{FHM} remain regular or initially regular with respect to powers of $I$? We shall address this question; particularly, we shall investigate when a linear form $f = b_0 + \dots + b_t$ satisfying $(\star)$, or a sequence of such forms, is regular or initially regular with respect to powers of $I$.

When $I$ is the edge ideal of a graph, a sum $f = b_0 + \dots + b_t$ satisfying condition $(\star)$ corresponds to a \emph{star} subgraph of the given graph, consisting of a vertex and all its neighbors. Building on previous work of Chen, Morey, and Sung \cite{CMS}, our first result gives a satisfactory answer to the aforementioned question for edge ideals of a large class of graphs, including bipartite graphs (containing no odd cycles) and those having a unique odd cycle. We show that when the vertices $b_0, \dots, b_t$ are far enough from the unique odd cycle (if one exists) in the graph, the linear form $f = b_0 + \dots + b_t$ is regular with respect to powers of $I$.

\medskip

\noindent \textbf{\autoref{thm.graph}.} Let $G = (V_G,E_G)$ be a graph and let $I = I(G)$ be its edge ideal. Suppose that $f = b_0+\dots +b_t$ is a sum satisfying condition $(\star)$.
\begin{enumerate}
	\item If $G$ is bipartite, then $f$ is regular on $R/I^r$ for all $r \in \NN$.
	\item Suppose that $G$ contains a unique odd cycle $C$ of length $2k+1$ and $\ell = d(b_0,C)$ is the distance from $b_0$ to $C$. Then $f$ is regular on $R/I^r$ for all $r \le k+\ell -1$.
\end{enumerate}

For monomial ideals in general, the question of when such a linear form $f = b_0 + \dots +b_t$ is regular with respect to powers of $I$ turns out to be very difficult. The bulk of our work then restricts to $I^2$ and the case where $f$ has a small number of terms; particularly, when $f$ is a binomial or a trinomial.

In \cite{FHM}, the question of when $f=b_0+b_1$ is regular with respect to $R/I$ was examined. We investigate when $f=b_0+b_1$ is regular with respect to $R/I^2$, where $I$ is any monomial ideal. Specifically, we prove the following theorem.

\medskip

\noindent \textbf{\autoref{sum 2 regular on power}.} Let $I$ be a monomial ideal in a polynomial ring $R$.
Let $b_0$ and $b_1$ be distinct variables that satisfy condition $(\star)$. Suppose that there do not exist variables $c$ and $d$ such that all three $b_1c, b_1d$ and $cd$ divide minimal generators of $I$. Then, $f = b_0+b_1$ is regular on $R/I^2$.

\medskip

We extend this result in Theorem~\ref{sequences for square} to give a criterion for when linear forms with disjoint support that satisfy condition $(\star)$ can be combined together to form a regular sequence with respect to $I^2$.

We then address the question of when trinomial linear sums of the form $f = b_0+b_1+b_2$ form initially regular sequences on $R/I^2$. It turns out that the question for trinomial linear sums is much more complicated than that for binomial linear sums. We give a criterion in Theorem~\ref{sequence sum 3} for when a sequence  of such trinomial linear sums is initially regular with respect to the second power of a monomial ideal. This further leads us to a result that says we may combine the two types of sequences from Theorems~\ref{sequences for square} and~\ref{sequence sum 3} to form longer initially regular sequences on $R/I^2$, Corollary~\ref{combine}.

\section*{Acknowledgements}
The authors would like to thank the referees for the careful review of the article and for the suggestions that improved the article. The second author is partially supported by Simons Foundation and Louisiana Board of Regents.

\section{Powers of edge ideals of graphs}

In this section, we address powers of edge ideals of graphs, focusing particularly on the class of bipartite graphs, i.e., graphs containing no odd cycles, and those that are close to being bipartite, i.e., those having a unique odd cycle. We shall also provide a number of auxiliary results for monomial ideals in general. Recall that $I$ is the edge ideal of a graph $G$ if $I$ is generated by $x_ix_j$ whenever $\{x_i,x_j\}$ is an edge of $G$, see \cite{Vil}. For general background on monomial ideals and depth see \cite{Vilbook}.

It is known (see \cite{SVV}) that $\depth(R/I^r) \ge 1$, for all $r \in \NN$, if $I = I(G)$ is the edge ideal of a bipartite graph $G$. Hence for every $r \in \NN$, there exists a regular element on $R/I^r$. We provide a means, starting with any vertex of the graph, to produce such a regular element of $R/I(G)^r$, for any $r \in \NN$ (or for sufficiently small $r$ when $G$ contains a unique cycle). For a subgraph $H$ and a vertex $x$ of $G$, we denote by $d(x,H)$ the \emph{distance} between $x$ and $H$, that is the minimum length of a path connecting $x$ and a vertex in $H$ (with the convention that $d(x,H) = \infty$ if no such path exists). If $W \subseteq V_G$, set $N[W]$ to be the closed neighborhood of $W$, which consists of the vertices in $W$ together with all vertices of $G$ adjacent to a vertex in $W$. Similarly, for $W\subseteq V_G$, the open neighborhood of $W$ is $N(W)=N[W]\setminus \{W\}$.

\begin{theorem} \label{thm.graph}
	Let $G = (V_G, E_G)$ be a graph and let $I = I(G)$. Let $b_0 \in V_G$ and suppose that $N(b_0) = \{b_1, \dots, b_t\}$. Set $f = b_0 + \dots + b_t$.
	\begin{enumerate}
		\item If $G$ is a bipartite graph, then $f$ is regular on $R/I^r$ for all $r \in \NN$.
		\item Suppose that $G$ contains a unique odd cycle $C$ of length $2k+1$ and $\ell = d(b_0,C)$. Then $f$ is regular on $R/I^r$ for all $r \le k+\ell-1$.
	\end{enumerate}
\end{theorem}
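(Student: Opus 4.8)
The plan is to recast the regularity of $f$ as a statement about the associated primes of $I^r$. Since $I^r$ is a monomial ideal, every prime in $\Ass(R/I^r)$ has the form $P_W = (x_v : v \in W)$ for a subset $W \subseteq V_G$, and because $f$ is the sum of the \emph{distinct} variables indexed by $N[b_0] = \{b_0, b_1, \dots, b_t\}$, one has $f \in P_W$ if and only if $N[b_0] \subseteq W$. Hence $f$ is regular on $R/I^r$ exactly when no associated prime of $I^r$ contains the closed neighborhood $N[b_0]$. The observation driving both parts is that $N[b_0]$ is never contained in a \emph{minimal} vertex cover: if $N[b_0] \subseteq W$ with $W$ a cover, then every edge at $b_0$ is already covered by some neighbor $b_i \in W$, so $W \setminus \{b_0\}$ is still a cover and $W$ is not minimal. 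Thus only embedded (non-minimal) associated primes can obstruct regularity.

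For part (1) I would invoke the theorem of Simis, Vasconcelos and Villarreal \cite{SVV} that a bipartite edge ideal is normally torsion-free; consequently $\Ass(R/I^r) = \Min(R/I)$ for every $r$, and the latter is exactly the set of minimal vertex covers of $G$. By the observation above none of these contains $N[b_0]$, so $f$ avoids every associated prime of $I^r$ and is regular for all $r$.

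For part (2) the embedded primes are precisely the difficulty, and here I would build on the analysis of $\Ass(R/I^r)$ for graphs with a unique odd cycle in \cite{CMS}. The first step is structural: if $P_W \in \Ass(R/I^r)$ with $N[b_0] \subseteq W$, then $P_W$ is embedded by the observation, and for a graph whose only odd cycle is $C$ an embedded associated prime forces $C \subseteq W$ (if $C \not\subseteq W$, then localizing at $P_W$ destroys the odd cycle and leaves a normally torsion-free local ideal, so $P_W$ would be minimal). Thus $W$ must contain both $V(C)$ and $N[b_0]$ and connect them. The second, quantitative step is to show, using the structural results of \cite{CMS}, that a prime containing the whole cycle $C$ of length $2k+1$ together with a vertex $b_0$ at distance $\ell$ from $C$ cannot be associated to $I^r$ unless $r \ge k+\ell$. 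Under the hypothesis $r \le k+\ell-1$ no such $P_W$ exists, so $f$ is regular on $R/I^r$.

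The main obstacle is exactly this last estimate, which I find cleanest to phrase through edge packings: for a monomial $g$ let $\nu(g)$ denote the largest number of edges, counted with repetition, whose product divides $g$, so that $g \in I^s$ if and only if $\nu(g) \ge s$. An obstructing prime then corresponds to a monomial $g$ with $\nu(g) = r-1$ for which $\nu(b_i g) = r$ for every $i \in \{0, \dots, t\}$, that is, each vertex of $N[b_0]$ augments an optimal packing of $g$. In the bipartite case a standard augmenting-path (exchange) argument rules this out directly and re-proves part (1); when the unique odd cycle is present it can act as a blossom that absorbs such an augmentation, but only by routing a packing along a path of length $\ell$ from $b_0$ to $C$ and then around $C$, consuming at least $k+\ell$ edges. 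Turning this blossom heuristic into a bound that is uniform over the arbitrary trees hanging off the cycle, and that accounts for all neighbors $b_1, \dots, b_t$ of $b_0$ at once, is the delicate point, and is where the detailed associated-prime computations of \cite{CMS} enter.
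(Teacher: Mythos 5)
Your reduction (regularity of $f$ on $R/I^r$ is equivalent to no $\wp \in \Ass(R/I^r)$ containing $N[b_0]$, and $N[b_0]$ lies in no minimal vertex cover) is correct and is exactly the paper's starting point; part (1) then matches the paper's proof verbatim in substance, via \cite{SVV}. The gap is in part (2), and it is twofold. First, your intermediate quantitative claim --- that a prime containing $C$ together with a vertex $b_0$ at distance $\ell$ from $C$ cannot be associated to $I^r$ unless $r \ge k+\ell$ --- is false as stated. By \cite{CMS}, the embedded primes for $r > k$ have the form $\wp = (R_r, B_r, W)$, where $R_r$ grows recursively out of the cycle, $B_r = N[R_r]\setminus R_r$, and $W$ is a \emph{minimal completion} of $R_r \cup B_r$ to a vertex cover of $G$. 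The set $W$ must cover all edges of $G$ away from the cycle, so an embedded associated prime routinely contains vertices at arbitrarily large distance from $C$ (e.g., a pentagon with a long path attached: already for $r = k+1$ the embedded prime contains alternate vertices all along the path). So distance from $C$ alone does not exclude a vertex from an embedded associated prime, and no packing/blossom estimate of the kind you propose can establish the claim in that form.

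Second, the correct obstruction --- which your own minimal-cover observation already supplies, suitably redeployed --- is containment of the \emph{entire} closed neighborhood $N[b_0]$, and this is how the paper argues. The construction in \cite{CMS} gives the distance bounds $\max\{d(z,C) \mid z \in R_r\} \le r-k-1$ and $\max\{d(z,C) \mid z \in B_r\} \le r-k$, so for $r \le k+\ell-1$ one gets $b_0 \notin R_r \cup B_r$. If now $N[b_0] \subseteq R_r \cup B_r \cup W$, then $b_0 \in W$; but since all of $N(b_0)$ lies in $R_r \cup B_r \cup W$, every edge at $b_0$ is covered without $b_0$, so $R_r \cup B_r \cup (W\setminus\{b_0\})$ is still a vertex cover, contradicting the minimality of $W$. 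Hence $f \notin \wp$ for every embedded $\wp$, and the case $r \le k$ is immediate since $\Ass(R/I^r) = \Min(R/I)$ there by \cite{CMS}. In short: you do not need to bound which vertices can appear in $\wp$ (impossible), only to play the distance bound on $R_r \cup B_r$ against the minimality of $W$; as written, your proposal leaves the key estimate open and aims it at a false target.
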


\begin{proof} (1) It is well known (cf. \cite{SVV}) that if $G$ is bipartite then $\Ass(R/I^r) = \Min(R/I)$ for all $r \in \NN$. Moreover, by \cite[Theorem 3.11]{FHM}, $f$ is regular on $R/I$. Hence, $f$ is regular on $R/I^r$ for all $r \in \NN$.
	
\noindent(2) It suffices to show that for $r \le k+\ell-1$, $f \not\in \wp$ for all $\wp \in \Ass(R/I^r)$. By \cite[Theorems 4.1 and 5.6]{CMS}, we have $\Ass(R/I^r) = \Min(R/I)$ for $r \le k$ and $\Ass(R/I^r) = \Min(R/I) \cup \P_r$ for $r > k$, where $\P_r$ consists of prime ideals of the form $(R_r, B_r, W)$, in which $R_r$ is a collection of vertices in $G$ that is constructed recursively starting with the odd cycle, $B_r = N[R_r] \setminus R_r$ and $W$ is a minimal subset of the vertices in $G$ such that $R_r \cup B_r \cup W$ forms a vertex cover of $G$.
	
It follows from \cite[Theorem 3.11]{FHM} that $f$ is regular on $R/I$. Thus, $f$ is regular on $R/I^r$ for all $r \le k$.
For $r > k$, consider any $\wp = (R_r, B_r, W) \in \P_r$. Notice that, by construction,
	$$\max\{d(z,C) ~\big|~ z \in R_r\} \le r-k-1 \text{ and } \max\{d(z,C) ~\big|~ z \in B_r\} \le r-k.$$
	Therefore, for $r \le k+\ell-1$, we have $\max\{d(z,C) ~\big|~ z \in R_r \cup B_r\} \le \ell-1 < d(b_0,C)$. This implies that $R_r \cup B_r$ does not contain $b_0$. Hence, having $N[b_0] \subseteq R_r \cup B_r \cup W$ would violate the minimality of $W$, since if $R_r \cup B_r \cup W$ is a vertex cover of $G$ containing $N[b_0]$ then $R_r \cup B_r \cup (W\setminus \{b_0\})$ is also a vertex cover of $G$. That is, $N[b_0] \not\subseteq R_r \cup B_r \cup W$. This implies, since $\wp$ is a monomial prime ideal, that $f \not\in \wp$. The assertion follows.
\end{proof}

To illustrate this result, we consider the following example.

\begin{example}
	Let $R=\mathbb{Q}[a,b,c,d,e,f,g,h]$ and let $I=(ab,bc,cd,de,ae,af,fg,gh)$ be the ideal corresponding to the graph $G$ depicted below.

	\begin{tikzpicture}
		
		\tikzstyle{point}=[circle,thick,draw=black,fill=black,inner sep=0pt,minimum width=4pt,minimum height=4pt]

		\node (a)[point, label={[xshift=-.1cm, yshift=-0.1cm] $\bf{a}$}] at (1,1) {};
		
		\node (b)[point,label={[xshift=0.0cm, yshift=-0.1cm] $\bf{b}$}] at (1.4,1.9) {};
		
		\node (c)[point,label={[xshift=-0.0cm, yshift=-0.1cm] $\bf{c}$}] at (.5,2.5) {};
		
		\node (d)[point,label={[xshift=0.0cm, yshift=-0.1cm] $\bf{d}$}] at (-.4,1.9) {};
		
		\node (e)[point,label={[xshift=0.1cm, yshift=-0.1cm] $\bf{e}$}] at (0,1) {};
		
		\node (f)[point,label={[xshift=0.0cm, yshift=-0.1cm] $\bf{f}$}] at (2.5,1) {};
		
		\node (g)[point,label={[xshift=0.0cm, yshift=-0.1cm] $\bf{g}$}] at (4,1) {};
		
		\node (h)[point,label={[xshift=0.0cm, yshift=-0.1cm] $\bf{h}$}] at (5.5,1) {};
		
		\draw (a.center) -- (b.center);
		\draw (b.center) -- (c.center);
		\draw (c.center)--(d.center);
		\draw (d.center) -- (e.center);
		\draw (a.center) -- (e.center);
		\draw (a.center) -- (f.center);
		\draw (f.center) -- (g.center);
		\draw (g.center) -- (h.center);

	\end{tikzpicture}
	
	The unique cycle has length $5$ so $k=2$. If $h$ is selected to be $b_0$, then $\ell = 3$. Thus $h+g$ is a regular element on $R/I^r$ for all $r \le 4$. Note that $\depth(R/I^5)=0$, so this example shows that the bound in Theorem~\ref{thm.graph} is optimal.
\end{example}

Generalizing Theorem~\ref{thm.graph} to hypergraphs is difficult, partly because the set of associated primes of powers of a square-free monomial ideal in general (the edge ideal of a hypergraph) is significantly more complex and less well-understood (cf. \cite{FHVT, LT}).  However, for the second power of a square-free monomial ideal, the following result of Terai and Trung \cite{TT} completely describes its associated primes. Recall that a hypergraph $H$ is a collection of vertices $V_H$ together with a collection of edges $E_H$, which are subsets of arbitrary size of the vertex set. A hypergraph is {\em simple} if $e_1 \not\subseteq e_2$ for all $e_1,e_2 \in E_H$.

\begin{definition}[{\cite{TT}}]
	\label{def.2sat}
	Let $H = (V_H, E_H)$ be a simple hypergraph.
	\begin{enumerate}
		\item A subset $U \subseteq V_H$ is \emph{decomposable} in $H$ if $U$ can be partitioned into two subsets each of which contains an edge of $H$; otherwise, $U$ is called \emph{indecomposable}.
		\item A subset $U \subseteq V_H$ is said to be \emph{2-saturating} in $H$ if $U$ is indecomposable in $H$ and $U \setminus \{i\}$ is decomposable in $H(i)$, for every vertex $i \in V_H$, where $H(i)$ is the hypergraph over $V_H \setminus \{i\}$ with edge set $\{e \setminus \{i\} ~\big|~ e \in E_H\}$ (including possibly the empty set).
	\end{enumerate}
\end{definition}

For a subset $C \subseteq V_H$ of the vertices in a hypergraph $H$, let $H_C$ be the hypergraph whose edges are $\{e \cap C ~\big|~ e \in E_H\}$ and set
$$P_C = \langle x_i ~\big|~ i \in C\rangle.$$

\begin{theorem}[{\cite[Theorem~3.1]{TT}}]\label{TT thm}
	Let $I = I(H)$ be the edge ideal of a hypergraph $H$. For a subset $C$ of the vertices in $H$, $P_C$ is an associated prime of $I^2$ if and only if $H_C$ has a $2$-saturating set.
\end{theorem}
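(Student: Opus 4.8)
The plan is to reduce the statement to a question about the maximal ideal and then match the resulting monomial condition to the combinatorics of $2$-saturating sets. Since the associated primes of a monomial ideal are themselves monomial, I would first localize at $P_C$: inverting the variables $x_j$ with $j \notin C$ (equivalently, setting them equal to $1$) carries $I(H)$ to $I(H_C)$ and carries $P_C$ to the maximal ideal $\m_C$ of $R_C = k[x_i ~\big|~ i \in C]$. Because localization commutes with forming associated primes, this reduces the theorem to the following intrinsic claim: for a hypergraph $H'$ with edge ideal $J = I(H')$, the maximal ideal $\m$ lies in $\Ass(R/J^2)$ if and only if $H'$ has a $2$-saturating set. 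I would then invoke the standard criterion that $\m \in \Ass(R/J^2)$ precisely when there is a monomial $m \notin J^2$ with $x_i m \in J^2$ for every variable $x_i$, i.e. a \emph{witness} monomial whose colon ideal is $\m$.

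Next I would translate membership in $J^2$ into multiset language: since $J$ is squarefree, $J^2$ is generated by the products $M_e M_{e'}$ of edge-monomials, so a monomial $m = \prod_v x_v^{a_v}$ lies in $J^2$ exactly when there exist (not necessarily distinct) edges $e, e'$ with $[v \in e] + [v \in e'] \le a_v$ for all $v$. For the ``if'' direction I would take the witness $m = \prod_{v \in U} x_v$ attached to a $2$-saturating set $U$. With this choice, $m \notin J^2$ is exactly the statement that $U$ contains no two disjoint edges, which is the indecomposability of $U$; and for $i \in U$ the condition $x_i m \in J^2$ (where the exponent of $x_i$ is now $2$, so two edges may share $i$) unwinds precisely to the decomposability of $U \setminus \{i\}$ in $H(i)$. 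For the ``only if'' direction I would, conversely, start from a witness $m$, replace it by a squarefree witness and set $U = \supp(m)$, then read off that $U$ is indecomposable (from $m \notin J^2$) and that each $U \setminus \{i\}$ is decomposable in $H(i)$ (from $x_i m \in J^2$).

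The hard part will be the bookkeeping of multiplicities at the vertices $i \in C \setminus U$, where the exponent of $x_i$ in $x_i m$ is only $1$. Here the algebraic condition $x_i m \in J^2$ demands a cover $M_e M_{e'} \mid x_i m$ in which \emph{at most one} of $e, e'$ uses $i$, whereas the combinatorial condition ``$U$ is decomposable in $H(i)$'' a priori allows both $e, e'$ to contain $i$. Reconciling these is the crux. For the ``if'' direction I must show that a $2$-saturating set always admits, for each such $i$, a decomposition of $U$ in $H(i)$ realized by two edges at most one of which passes through $i$, so that $\prod_{v \in U} x_v$ is genuinely a witness; for the ``only if'' direction I must justify that a witness may be taken squarefree on its support, since merely radicalizing a witness need not preserve the colon condition. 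I expect both points to follow from analyzing a \emph{minimal} witness together with the saturation conditions at the vertices of $U$, exploiting that the decomposability requirements inside $U$ force enough edges to produce the desired single-$i$ covers; isolating and proving this multiplicity lemma is where the real work lies.
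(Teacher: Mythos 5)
First, a point of reference: the paper does not prove this statement at all. It is quoted verbatim from Terai--Trung \cite[Theorem~3.1]{TT} and used as a black box (e.g.\ in Corollary~\ref{linear sum regular on power} and in the proof of Theorem~\ref{sequence sum 3}), so there is no internal proof to compare against; the only meaningful comparison is with Terai--Trung's original argument. Your scaffolding does follow that argument's standard route, and the parts you carry out are correct: localizing at $P_C$ (inverting $x_j$, $j \notin C$) does carry $I(H)$ to $I(H_C)$ and reduces the statement to whether the maximal ideal $\m$ lies in $\Ass(R_C/I(H_C)^2)$; the monomial-witness criterion $(J^2 : m) = \m$ is the right tool; and your translations are accurate where you check them --- for $m = x_U = \prod_{v \in U} x_v$, the condition $m \notin J^2$ is exactly that $U$ contains no two disjoint edges (indecomposability), and for $i \in U$ the condition $x_i m \in J^2$ is exactly decomposability of $U \setminus \{i\}$ in $H(i)$, since an edge pair $e, e'$ with $e \cup e' \subseteq U$ and $e \cap e' \subseteq \{i\}$ corresponds precisely to disjoint edges $e \setminus \{i\}$, $e' \setminus \{i\}$ of $H(i)$ inside $U \setminus \{i\}$.

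However, as a proof the proposal is incomplete, and you say so yourself: the two statements you defer are not bookkeeping but the entire mathematical content of the theorem. Concretely, for $i \in C \setminus U$ the algebraic condition $x_i x_U \in J^2$ forces two \emph{disjoint} edges $e, e'$ with $e \cup e' \subseteq U \cup \{i\}$ (since $x_i x_U$ is squarefree, at most one of $e, e'$ may contain $i$), whereas the $2$-saturation condition only provides edges with $e \cap e' = \{i\}$ possible; in that case $x^e x^{e'}$ is divisible by $x_i^2$ and does not divide $x_i x_U$, so the naive witness genuinely fails for that edge pair. Showing that full $2$-saturation (indecomposability of $U$ together with the decomposability conditions at the vertices \emph{of} $U$, which force, for each $j \in U$, a pair of edges inside $U$ meeting exactly at $j$) always produces an alternative pair with at most one edge through $i$ is a nontrivial combinatorial argument, not a routine consequence of minimality. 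Symmetrically, in the ``only if'' direction, the reduction of an arbitrary witness $m$ with $(J^2 : m) = \m$ to a squarefree one requires proof --- as you note, radicalizing $m$ need not preserve the colon condition --- and this reduction is again a substantive lemma specific to second powers. These two points are exactly what Terai--Trung's analysis of $(I^2 : m)$ establishes; your outline is a correct reduction plus an accurate diagnosis of where the difficulty sits, but without proofs of the ``multiplicity lemma'' and the squarefree-witness lemma it does not constitute a proof of the theorem.
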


As an immediate consequence of Theorem~\ref{TT thm}, we obtain the following corollary.

\begin{corollary} \label{linear sum regular on power}
	Let $I = I(H)$ be an edge ideal of a hypergraph in a polynomial ring $R$. Let $b_0, \ldots, b_t$ be distinct variables in $R$ such that
	$\{b_0, \ldots, b_t\}$ is not a subset of any $2$-saturating set in $H$.
	If $f=b_0+\ldots +b_t$, then $f$ is regular on $R/I$ and $R/I^2$.
\end{corollary}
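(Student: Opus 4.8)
The plan is to translate the regularity of $f$ into an avoidance-of-associated-primes statement and then feed that into Terai and Trung's criterion, Theorem~\ref{TT thm}. The starting point is the standard fact that a homogeneous element is a nonzerodivisor on $R/J$ exactly when it lies in no associated prime of $R/J$. Since $I$ and $I^2$ are monomial ideals, every associated prime of $R/I$ or of $R/I^2$ is a monomial prime, necessarily of the form $P_C=\langle x_i \mid i\in C\rangle$ for some subset $C$ of the vertices. The first thing I would record is that a sum of distinct variables lies in a monomial prime precisely when each of its variables does: $f=b_0+\dots+b_t\in P_C$ if and only if $\{b_0,\dots,b_t\}\subseteq C$, since a polynomial belongs to a monomial ideal only if every one of its terms does.

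Next I would handle the two powers at once. Because $I=I(H)$ is a squarefree monomial ideal it is radical, so $R/I$ is reduced and $\Ass(R/I)=\Min(R/I)$. As $\Min(R/I)=\Min(R/I^2)\subseteq \Ass(R/I^2)$, any element avoiding every associated prime of $R/I^2$ automatically avoids every associated prime of $R/I$. Hence it suffices to prove that $f$ lies in no associated prime of $R/I^2$, and regularity on $R/I$ will come for free.

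For the remaining claim I would argue by contradiction. Suppose $f$ were a zerodivisor on $R/I^2$; then $f\in P_C$ for some $P_C\in\Ass(R/I^2)$, and by the first observation $\{b_0,\dots,b_t\}\subseteq C$. Applying Theorem~\ref{TT thm} to this associated prime shows that the hypergraph $H_C$ carries a $2$-saturating set. This places $\{b_0,\dots,b_t\}$ inside a vertex set $C$ whose associated hypergraph admits a $2$-saturating set, contradicting the hypothesis. Therefore no associated prime of $R/I^2$ contains $f$, so $f$ is regular on $R/I^2$, and consequently on $R/I$.

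The hard part will be the reconciliation between the hypothesis, phrased in terms of $2$-saturating sets, and the set $C$ that Theorem~\ref{TT thm} produces: the $2$-saturating set furnished by the criterion lives in the localized hypergraph $H_C$, not in $H$ itself, so I would be explicit that the configuration the hypothesis must forbid is exactly a containment $\{b_0,\dots,b_t\}\subseteq C$ with $H_C$ carrying a $2$-saturating set. Once this reading of ``$2$-saturating set'' is fixed, the rest is the routine dictionary between regular elements, associated primes, and the monomial-prime membership of a sum of distinct variables, which is why the result is an immediate consequence of Theorem~\ref{TT thm}.
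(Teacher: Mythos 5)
Your proposal is correct and takes essentially the same route as the paper: the paper's entire proof reads ``The assertion follows from [TT, Theorem~3.1],'' and you have simply made the implicit dictionary explicit (associated primes of monomial ideals are monomial primes $P_C$; $f\in P_C$ iff $\{b_0,\ldots,b_t\}\subseteq C$; and $\Ass(R/I)=\Min(R/I)=\Min(R/I^2)\subseteq\Ass(R/I^2)$, which disposes of the first power for free). Your closing caveat is also the right call, not a defect: the hypothesis must indeed be read as forbidding a containment $\{b_0,\ldots,b_t\}\subseteq C$ where $H_C$ carries a $2$-saturating set, since Theorem~\ref{TT thm} furnishes the $2$-saturating set inside the localized hypergraph $H_C$ rather than in $H$ itself, and this reading is consistent with the paper's subsequent remark about $2$-saturating sets of induced subgraphs.
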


\begin{proof}
	The assertion follows from \cite[Theorem~3.1]{TT}.
\end{proof}

The following remark concerns Corollary~\ref{linear sum regular on power} when $I$ is the edge ideal of a graph.

\begin{remark}
	Let $I = I(G)$ be the edge ideal of a graph $G$. Note that for a graph, $2$-saturating sets of induced subgraphs are determined by the triangles of $G$. Let $b_0, \ldots, b_t$ be distinct variables that satisfy condition $(\star)$. Suppose also that
if the set $\{b_0, \ldots, b_t\}$ intersects any triangle $T$ in $G$, then for some $0 \le i \le t$, $b_i$ is not connected to any vertex in $T$.
Then $f=b_0+\ldots +b_t$ is regular on $R/I$ by \cite[Theorem~3.11]{FHM} and on $R/I^2$ by Theorem~\ref{TT thm}.  	
\end{remark}

\begin{remark}\label{compare Terai-Trung}
	If $I=I(G)$ is the edge ideal of a hypergraph $G$ and the conditions in Corollary~\ref{linear sum regular on power} hold for $b_0, b_1$, then $f=b_0+b_1$ is regular on $R/I^2$. We shall see later, in Theorem~\ref{sum 2 regular on power}, that a similar phenomenon holds for more general monomial ideals.		
\end{remark}

As we have remarked, associated primes of more general monomial ideals are not yet sufficiently well-understood. A new approach is thus needed in order to proceed further for monomial ideals. This will be carried out in the next two sections. Particularly, we shall closely examine the structure of initial ideals and invoke results from \cite{FHM}.

Recall that $S$-resultants are used in Buchberger's algorithm when computing the Gr\"{o}bner basis of an ideal. For general background information on Gr\"obner bases or Buchberger's algorithm, see \cite{AL}. Basically, the $S$-resultant of two polynomials with respect to a fixed a term order is the polynomial formed by canceling the leading terms of each. That is,
$$S(f,g) = \frac{\lcm(\ini(f),\ini(g))}{\ini(f)}f - \frac{\lcm(\ini(f),\ini(g))}{\ini(g)} g$$
where $\ini(f)$ is the initial term of $f$ relative to the fixed term order.

We collect here a few observations on $S$-resultants that we use later for this purpose. The proofs of each of the statements are straightforward based on the definition of an $S$-resultant but are included for completeness.

\begin{lemma}\label{S-resultant tricks}
	Let $R$ be a polynomial ring over a field $k$, $R_2=k[b_0, \ldots, b_t]$ and let $R_1$ be the polynomial ring such that $R=R_1[b_0, \ldots, b_t]$.
	 The following observations about $S$-resultants hold:	
	\begin{enumerate}[label=$($\alph*$)$]
		\item  For $f_1=M_1g_1, f_2=M_2g_2$, with $M_1, M_2$ are monomials in $R_1$ and $g_1, g_2 \in R_2$,
		$$S(f_1,f_2)=\lcm(M_1, M_2)S(g_1,g_2).$$
	
		\item If $M,N$ are monomials and $f\in R$, then
		$$S(Nf,M)=\dfrac{\lcm(M,N \ini(f))}{\ini(f)}(f-\ini(f)).$$
		\vspace{.15in}
	If $\gcd(M, \ini(f))=1$, then this is a multiple of $M$.
	
		\item 	If $p_1, p_2 \in R$ satisfy $\gcd(\ini(p_1), \ini(p_2))=1$, then
		$$S(p_1, p_2)=\ini(p_1)(p_2-\ini(p_2))-\ini(p_2)(p_1-\ini(p_1)).$$

		\item If $M,N$ are monomials and $f\in R$, then
		$$S(Mf, Nf)=0.$$
	\end{enumerate}

\end{lemma}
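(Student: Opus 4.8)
The plan is to verify each identity by direct substitution into the definition of the $S$-resultant, using repeatedly the elementary fact that multiplying a polynomial by a monomial $M$ merely rescales its leading term, so that $\ini(Mg) = M\ini(g)$, together with standard $\lcm$ and $\gcd$ arithmetic for monomials. No deeper machinery is needed: the content of the lemma is entirely in careful bookkeeping.

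For part (a) I would first record that $\ini(f_i) = M_i\ini(g_i)$ for $i = 1,2$. Since each $M_i$ is a monomial in the variables of $R_1$ while each leading term $\ini(g_i)$ involves only the variables $b_0, \dots, b_t$ of $R_2$, these two monomials lie in disjoint sets of variables, so $\lcm(M_1\ini(g_1), M_2\ini(g_2)) = \lcm(M_1, M_2)\lcm(\ini(g_1), \ini(g_2))$. Substituting into the definition, each of the two coefficients acquires the common factor $\lcm(M_1,M_2)$, and factoring it out leaves exactly $S(g_1,g_2)$ inside, giving $S(f_1,f_2) = \lcm(M_1,M_2)S(g_1,g_2)$.

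For part (b), treating the monomial $M$ as its own leading term and using $\ini(Nf) = N\ini(f)$, the definition yields $S(Nf, M) = \frac{\lcm(M, N\ini(f))}{\ini(f)}f - \lcm(M, N\ini(f))$; rewriting the second summand as $\frac{\lcm(M, N\ini(f))}{\ini(f)}\ini(f)$ and factoring then produces the claimed form $\frac{\lcm(M, N\ini(f))}{\ini(f)}(f - \ini(f))$. For the final assertion, when $\gcd(M,\ini(f)) = 1$ both $M$ and $\ini(f)$ divide $\lcm(M, N\ini(f))$ and are coprime, so their product $M\ini(f)$ divides it as well; hence $M$ divides $\lcm(M,N\ini(f))/\ini(f)$, and the entire expression is a multiple of $M$.

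Parts (c) and (d) are the most mechanical. In (c) the hypothesis $\gcd(\ini(p_1),\ini(p_2)) = 1$ forces $\lcm(\ini(p_1),\ini(p_2)) = \ini(p_1)\ini(p_2)$, collapsing the two coefficients to $\ini(p_2)$ and $\ini(p_1)$; writing each $p_i = \ini(p_i) + (p_i - \ini(p_i))$ and expanding, the products of leading terms cancel, leaving precisely the stated combination of $\ini(p_1)(p_2 - \ini(p_2))$ and $\ini(p_2)(p_1 - \ini(p_1))$ (with the sign fixed by the $S$-resultant convention for the ordering of the arguments). In (d), $\ini(Mf) = M\ini(f)$ and $\ini(Nf) = N\ini(f)$ give $\lcm(M\ini(f), N\ini(f)) = \lcm(M,N)\ini(f)$, so each of the two terms reduces to $\lcm(M,N)f$ and their difference vanishes. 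The only points demanding any care are the disjoint-variable $\lcm$ factorization in (a) and the coprimality-to-divisibility step in (b); accordingly I would state explicitly the assumption that the leading term of each $g_i$ computed in $R_2$ agrees with its leading term in $R$, which holds because the fixed term order on $R$ restricts to a term order on the subring $R_2$.
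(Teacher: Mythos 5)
Your proposal is correct and follows essentially the same route as the paper's proof: direct substitution into the definition of the $S$-resultant, using $\ini(Mg)=M\ini(g)$ and the factorization of the $\lcm$ over disjoint variable sets in (a), the monomial cancellation in (b) and (d), and $\lcm = $ product under coprimality in (c). You in fact go slightly further than the paper by explicitly justifying the ``multiple of $M$'' claim in (b) and by noting that leading terms in $R_2$ agree with those in $R$, both of which the paper leaves implicit.
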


\begin{proof}
{\bf $(a)$} Noting that $M_1, M_2 \in R_1$ and $g_1,g_2 \in R_2$, and that $R_1$ and $R_2$ are polynomial rings in disjoint sets of variables, we have	\begin{align*}
S(f_1,f_2) &=  \frac{\lcm(\ini(M_1g_1),\ini(M_2g_2))}{\ini(M_1g_1)}M_1g_1 - \frac{\lcm(\ini(M_1g_1),\ini(M_2g_2))}{\ini(M_2g)_2} M_2g_2 \\
 &=  \frac{\lcm{(M_1,M_2)}\lcm(\ini(g_1),\ini(g_2))}{M_1\ini(g_1)}M_1g_1 - \frac{\lcm(M_1,M_2)\lcm(\ini(g_1),\ini(g_2))}{M_2\ini(g)_2} M_2g_2 \\
 &=\lcm(M_1, M_2)S(g_1,g_2).		
\end{align*}

\medskip

{\bf $(b)$} Since $M$ is a monomial,
\begin{align*}
S(Nf,M) &= \frac{\lcm(M,\ini(Nf))}{\ini(f)} f-  \frac{\lcm(M,\ini(Nf))}{M}M\\
&=\dfrac{\lcm(M,N \ini(f))}{\ini(f)}(f-\ini(f)).
\end{align*}

\medskip

{\bf $(c)$} This follows immediately from the definition of an $S$-resultant and the fact that
$$\lcm(\ini(p_1), \ini(p_2)) = \frac{\ini(p_1)\ini(p_2)}{\gcd(\ini(p_1)\ini(p_2))}.$$

\medskip

{\bf $(d)$} By definition,
\begin{align*}
	S(Mf,Nf) &= \frac{\lcm(\ini(Mf),\ini(Nf))}{\ini(Mf)}Mf - \frac{\lcm(\ini(Mf),\ini(Nf))}{\ini(Nf)} Nf\\
	&= \frac{\lcm(M\ini(f),N\ini(f))}{M\ini(f)}Mf - \frac{\lcm(M\ini(f),N\ini(f))}{N\ini(f)} Nf\\
	&= \frac{\lcm(M,N)\ini(f)}{M\ini(f)}Mf - \frac{\lcm(M,N)\ini(f)}{N\ini(f)} Nf=0.
\end{align*}
\end{proof}

\section{Binomial regular elements and squares of monomial ideals} \label{sec.bin}

In this section, we consider monomial ideals in general. As a trade-off for the lost structure compared to edge ideals of hypergraphs, we will focus on linear sums consisting of only two elements.

We start by showing that for low powers of $I$, a crucial colon ideal stays monomial in this setting. For any monomial ideal $I$ let $\mathcal{G}(I)$ denote the set of minimal monomial generators of $I$. For a monomial $M$ and a variable $x$, we denote by $d_x(M)$ the largest power of $x$ that divides $M$.  For a monomial ideal $I$ and a variable $x$, set
$$d_x(I) = \max \{d_x(M) ~\big|~ M \in \G(I) \}.$$

\begin{lemma} \label{monomial colon 2}
Let $I$ be a monomial ideal in a polynomial ring $R$ over a field $k$. Let $b_0, b_1$ be distinct variables in $R$ that satisfy condition $(\star)$.
Then for $t \le 3$, we have
$$(I^t:b_0+b_1)=(I^t:b_0) \cap (I^t:b_1).$$
In particular, $(I^t:b_0+b_1)$ is a monomial ideal for $t \le 3$.
\end{lemma}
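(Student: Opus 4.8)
\emph{The plan.} The inclusion $(I^t:b_0)\cap(I^t:b_1)\subseteq (I^t:b_0+b_1)$ is immediate and uses no hypothesis: if $b_0g,b_1g\in I^t$ then $(b_0+b_1)g\in I^t$. Since a colon by a variable and a finite intersection of monomial ideals are again monomial, the ``In particular'' clause will follow once the reverse inclusion is proved. For the reverse inclusion it suffices to show that $(b_0+b_1)g\in I^t$ forces $b_0g\in I^t$, because then $b_1g=(b_0+b_1)g-b_0g\in I^t$ as well. I would first record the structural consequence of $(\star)$. Writing $R=R_1[b_0,b_1]$, condition $(\star)(2)$ says no minimal generator is divisible by $b_0$ alone, while $(\star)(1)$ forces $b_0,b_1$ to appear to degree at most $1$; hence every minimal generator of $I$ has one of the three forms $b_0b_1u$, $b_1v$, $w$ with $u,v,w\in R_1$. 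Collecting these into monomial ideals $A,B,\Gamma\subseteq R_1$ gives $I=b_0b_1A+b_1B+\Gamma$, and therefore
\[
I^t=\sum_{p+q+r=t} b_0^{\,p}\,b_1^{\,p+q}\,A^pB^q\Gamma^r .
\]

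\emph{Reduction to a combinatorial statement.} Fix a monomial $m\in R_1$ and set $U_m=\{(a,b)\in\NN^2: b_0^ab_1^bm\in I^t\}$. The displayed formula shows $b_0^ab_1^bm\in I^t$ exactly when there is a witness $(p,q,r)$ with $p+q+r=t$, $m\in A^pB^q\Gamma^r$, $p\le a$ and $p+q\le b$; in particular $U_m$ is upward closed. Expanding $g=\sum_{i,j}g_{ij}b_0^ib_1^j$ with $g_{ij}\in R_1$ and using that $I^t$ is monomial, the hypothesis $(b_0+b_1)g\in I^t$ says precisely that for every $(a,b)$ and every $m$ whose coefficient in $g_{a-1,b}+g_{a,b-1}$ is nonzero one has $(a,b)\in U_m$, whereas the desired conclusion $b_0g\in I^t$ says that the coefficient of $m$ in $g_{a-1,b}$ being nonzero forces $(a,b)\in U_m$. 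The only obstruction is cancellation between the two contributions to a common monomial $b_0^ab_1^bm$.

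\emph{The key point, where $t\le 3$ enters.} I would group the surviving monomials $b_0^ib_1^jm$ of $g$ by the antidiagonal $i+j$ and look at the $b_0$-exponents that occur, say with smallest value $k_{\min}$ and largest value $k_{\max}$. The extreme terms cannot be cancelled, so the hypothesis already places the points $(k_{\min},N-k_{\min})$ and $(k_{\max}+1,N-k_{\max}-1)$ in $U_m$, where $N=(i+j)+1$. Everything then reduces to showing that \emph{each antidiagonal slice of $U_m$ has no gaps}: if $(a',N-a')$ and $(a'',N-a'')$ lie in $U_m$ with $a'<a''$, then $(a,N-a)\in U_m$ for all $a'\le a\le a''$. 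This is the crux. Each witness $(p,q,r)$ contributes to the level-$N$ slice the interval $\{a:\ p\le a\le N-p-q\}$, so the slice is a union of intervals; a gap would force the largest present value $a'$ to be realized as a right endpoint $a'=N-p_1-q_1$ and the smallest present value $a''>a'$ to be realized as a left endpoint $a''=p_2$. Then $p_1+q_1=N-a'\le t$ gives $N\le a'+t$, while the witness constraint $p_2+q_2\le N-a''$ with $p_2=a''$ gives $N\ge 2a''$; combined with $a''\ge a'+2$ these yield $a'\le t-4$, which is impossible for $t\le 3$. Hence the slice is an interval, the two anchor points fill the whole range $[k_{\min},k_{\max}+1]$, and in particular every required $b_0$-target $a=k+1$ lies in $U_m$; this is exactly $b_0g\in I^t$.

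\emph{Main obstacle.} The heart of the argument is the gap-free (interval) property of the antidiagonal slices of $U_m$, and the decisive move is to read the power formula $I^t=\sum b_0^{p}b_1^{p+q}A^pB^q\Gamma^r$ as expressing each slice as a union of explicit intervals, so that the obstruction collapses to the elementary inequality $a'\le t-4$. The bound $t\le 3$ is precisely what makes this impossible, which also signals that the monotonicity alone is not enough and that the hypothesis cannot be expected to persist for $t\ge 4$.
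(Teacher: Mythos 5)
Your argument is correct, and it takes a genuinely different route from the paper's. The paper works directly with a putative counterexample: it reduces to $f$ a sum of monomials none of which lies in $I^t$, picks a term $f_1$ of maximal $b_0$-degree $s$ among those with $f_1b_0\notin I^t$, exploits the forced cancellation $f_1b_0=f_jb_1$, and then counts how many of the $t$ generator factors of $f_jb_0$ (respectively $f_1b_1$) must involve $b_0$ and hence, by $(\star)$, also $b_1$; the resulting inequalities ($t\ge s+4\ge 4$, with a side case giving $t\ge 6$) rule out $t\le 3$. You instead extract from $(\star)$ the structural decomposition $I=b_0b_1A+b_1B+\Gamma$ with $A,B,\Gamma$ monomial ideals in the remaining variables, obtain the exact description $I^t=\sum_{p+q+r=t}b_0^{p}b_1^{p+q}A^pB^q\Gamma^r$, and convert the problem into staircase geometry: for fixed $m$, the level-$N$ antidiagonal slice of $U_m$ is a union of witness intervals $[p,\,N-p-q]$ with $p+q\le t$, each wholly contained in the slice, so a gap forces a right endpoint at $a'$ and a left endpoint at $a''\ge a'+2$, whence $2a''\le N\le a'+t$ and $a'\le t-4$, impossible for $t\le 3$; the two uncancellable extreme terms then anchor the whole range $[k_{\min},k_{\max}+1]$ inside the slice. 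I verified the details: the decomposition, the membership criterion for $U_m$, the bookkeeping that cancellation in $(b_0+b_1)g$ only pairs terms on a common antidiagonal with the same $R_1$-part $m$, and the endpoint inequalities are all sound. What your approach buys is a transparent explanation of the exact threshold: your inequality first becomes feasible at $t=4$ with $a'=0$, $a''=2$, $N=4$, which is precisely the shape of the paper's counterexample $I=(ab,\,bx_iy_i,\,x_1x_2x_3x_4,\,y_1y_2y_3y_4)$, where the level-$4$ slice for $m=x_1x_2x_3x_4y_1y_2y_3y_4$ is $\{0,2\}$; the paper's chain-of-cancellations argument is shorter and avoids describing $I^t$, but its case analysis hides why $3$ is sharp. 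One cosmetic point: your phrase ``the largest present value $a'$'' should read ``the element immediately to the left of the gap'' (and symmetrically for $a''$), which is what makes the covering intervals have right endpoint exactly $a'$ and left endpoint exactly $a''$.
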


\begin{proof}
Note that $(I^t:b_0) \cap (I^t:b_1) \subseteq (I^t:b_0+b_1)$ for all $t$, so we need only prove the other inclusion. Let $f \in (I^t:b_0+b_1)$. Since $I^t$ is monomial and $b_0 + b_1$ is homogeneous, we may assume $f$ is homogeneous. Write $f = \sum_{i=1}^{\ell}c_if_i$, where $f_1, \ldots, f_{\ell}$ are distinct monomials of the same degree and $c_i \in k$ for all $i$. By replacing $f$ by $f-c_if_i$ as needed, we may assume $f_i \not\in I^t$ for all $i$.

If $f b_0 \in I^t$, then $fb_1 = f(b_0+b_1) - fb_0 \in I^t$ as well. So it suffices to show $f \in (I^t:b_0)$.
Now $fb_0 \in I^t$ if and only if $f_ib_0 \in I^t$ for all $i$. Assume by way of contradiction that $f_i b_0 \not\in I^t$ for some $i$ and for some $t\le 3$.  Without loss of generality, assume $f_1$ is maximal with respect to the degree of $b_0$ among terms with this property. That is,
$$d_{b_0}(f_1) = \max_i\{d_{b_0}(f_i) \mid f_i b_0 \not\in I^t\}.$$
Set $s=d_{b_0}(f_1)$.
Since $f_1 b_0 \not\in I^t$, then $f_1 b_0 = f_jb_1$ for some $j$. It follows that $d_{b_0}(f_j)= s +1$. Thus $f_j b_0 \in I^t$. So $f_jb_0 = M_1\cdots M_t h$ for some $M_i\in \G(I)$ and some monomial $h$. Since $f_j \not\in I^t$ and $b_0$ has degree one in $I$, there must be $s+2$
of the $M_i$ that involve $b_0$. By definition, if $b_0 \mid M_i$, then $b_1 \mid M_i$ as well. Thus we have $$ d_{b_1}(f_j b_0) = d_{b_1} (f_j) \ge s+2.$$
Thus $f_j = b_0^{s+1}b_1^{s+2}N$ for some monomial $N$ with $b_0 \nmid N$.  As a result, $d_{b_1}(f_jb_1) \ge s+3$.
In addition, since $f_j b_0 \in I^{t}$, we may assume $s+2 \le t$. In particular, if $t \le 3$, then $s \le 1$.

Now consider $f_1b_1=b_0^{s}b_1^{s+4}N$. First assume $f_1b_1 \not\in I^t$. Then without loss of generality, $f_1b_1 = f_2 b_0$. If $s=0$ this is a contradiction. So suppose $s=1$. Then $f_2 = b_1^5 N$. Now if $f_2b_1 \in I^t$, then since $d_{b_1}(I) \le 1$, we have $t \ge 6$, which is a contradiction to $t \le 3$. But if $f_2b_1 \not\in I^t$, then $f_2b_1=f_kb_0$ for some $k$, which implies $b_0 \mid f_2$, a contradiction.

Now assume $f_1b_1 \in I^t$. We then have $f_1b_1 = M_1' \cdots M_t' h'$ for some $M_i'\in \G(I)$ and some monomial $h'$, where at least $s+4$ of the $M_i'$ involve $b_1$. Since $s \ge 0$, then $t \ge 4$. Thus for $t \le 3$, we have a contradiction. Thus the statement holds for $t \le 3$.

Since $(I^t:b_i)$ is monomial, it follows that for $t \le 3$,  $(I^t: b_0+b_1)$ is the intersection of two monomial ideals and is thus monomial.
\end{proof}

The following example illustrates that Lemma~\ref{monomial colon 2} cannot be extended to higher powers of $I$.

\begin{example}
Let $R=k[a,b,x_1,x_2,x_3,x_4,y_1,y_2,y_3,y_4]$ and set
$$I=(ab, bx_1y_1,bx_2y_2,bx_3y_3,bx_4y_4,x_1x_2x_3x_4,y_1y_2y_3y_4).$$
Set $f_1 = b^3x_1x_2x_3x_4y_1y_2y_3y_4$ and $f_2 = ab^2 x_1x_2x_3x_4y_1y_2y_3y_4$. Then:
\begin{itemize}
	\item $af_1=bf_2\not\in I^4$;
	\item $bf_1 = (bx_1y_1)(bx_2y_2)(bx_3y_3)(bx_4y_4) \in I^4$;
	\item $af_2 = (ab)^2(x_1x_2x_3x_4)(y_1y_2y_3y_4) \in I^4$.
\end{itemize}
Thus $f_1-f_2 \in (I^4:a+b)$ but $f_1, f_2 \not\in (I^4:a+b)$. So $(I^4:a+b)$ is not monomial.
\end{example}

We are now able to provide conditions under which $\depth(R/I^2)$ is positive, along with a regular element, for monomial ideals. Note that if $I$ is an edge ideal, the condition below ensures the element is not in a 2-saturating set, but our theorem holds in a more general setting than \autoref{TT thm} since $I$ need not be square-free.

\begin{theorem} \label{sum 2 regular on power}
Let $I$ be a monomial ideal  in a polynomial ring $R$. Let $b_0, b_1$ be distinct variables in $R$ that satisfy condition $(\star)$. Suppose that there do not exist variables $c,d$, not necessarily distinct, such that all three of $b_1c, b_1d, cd$ divide minimal generators of $I$. Then, $b_0+b_1$ is regular on $R/I^2$.

\end{theorem}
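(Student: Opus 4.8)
The plan is to prove that $b_0+b_1$ is a nonzerodivisor on $R/I^2$ by showing it avoids every associated prime, and the natural route is to combine the colon-ideal structure from Lemma~\ref{monomial colon 2} with a direct analysis of what goes wrong. By Lemma~\ref{monomial colon 2} (applied with $t=2$), we already know $(I^2 : b_0+b_1) = (I^2:b_0)\cap(I^2:b_1)$ is a monomial ideal. Thus $b_0+b_1$ is regular on $R/I^2$ if and only if this intersection equals $I^2$ itself, i.e.\ if and only if no monomial outside $I^2$ is carried into $I^2$ by \emph{both} $b_0$ and $b_1$. So I would reduce the theorem to the following monomial statement: there is no monomial $m\notin I^2$ with $mb_0\in I^2$ and $mb_1\in I^2$.

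\medskip
First I would set up the contradiction: suppose such a monomial $m\notin I^2$ exists with $mb_0\in I^2$ and $mb_1\in I^2$. Write $mb_0 = M_1M_2 h$ and $mb_1 = N_1N_2 h'$ with $M_i,N_i\in\mathcal{G}(I)$ and $h,h'$ monomials. Because $m\notin I^2$, the extra factor of $b_0$ (resp.\ $b_1$) must be genuinely used: at least one of $M_1,M_2$ must be divisible by $b_0$, and since by condition $(\star)(1)$ the degree of $b_0$ in any generator is at most $1$, and removing that single $b_0$ from $mb_0$ must destroy membership in $I^2$, we learn that $m$ itself is ``one $b_0$ short'' of lying in $I^2$ while $mb_0$ just barely lands inside. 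The key leverage is condition $(\star)(2)$: whenever $b_0$ divides a minimal generator $M$, some $b_i$ with $i\ge 1$ (here $i=1$) divides $M$ as well. So every generator contributing the $b_0$ in $mb_0$ simultaneously contributes a $b_1$. I would use this to extract information about $d_{b_1}(m)$ versus $d_{b_1}(mb_0)$, and then play the two factorizations of $mb_0$ and $mb_1$ against each other.

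\medskip
The heart of the argument, and the step I expect to be the main obstacle, is locating precisely where the extra hypothesis on $c,d$ is forced into play. The factorization $mb_1\in I^2$ can be achieved in genuinely different ways than $mb_0\in I^2$, and I anticipate that if one tries to build a monomial $m$ witnessing both memberships, the generators used to absorb $b_1$ must pair up two edges $b_1c$ and $b_1d$ through $b_1$, while the leftover degrees force $cd$ to divide a further generator (to reuse the $c$ and $d$ productively without creating spurious $I^2$-membership of $m$). Concretely, I would track the exponents: the obstruction should reduce to the existence of variables $c,d$ with $b_1c$, $b_1d$, and $cd$ each dividing a minimal generator of $I$ — exactly the configuration the hypothesis forbids. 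Ruling out all other cases is where the bookkeeping lives: one must check the subcases according to whether the generators carrying $b_0$ and $b_1$ overlap, whether $c=d$ is allowed, and whether $m$ already contains hidden generators. The degree-one constraint $(\star)(1)$ on $b_0,b_1$ keeps the exponent arithmetic finite and should prevent the pathology seen in the Example after Lemma~\ref{monomial colon 2} (which needed $t\ge 4$). Once every case either yields $m\in I^2$ (contradiction) or produces the forbidden $(b_1c,b_1d,cd)$ triple, the proof concludes that $(I^2:b_0)\cap(I^2:b_1)=I^2$, hence $b_0+b_1$ is regular on $R/I^2$.
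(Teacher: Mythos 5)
Your opening reduction is correct and coincides with the paper's first move: by Lemma~\ref{monomial colon 2} (with $t=2$) the colon $(I^2:b_0+b_1)=(I^2:b_0)\cap(I^2:b_1)$ is monomial, so the theorem reduces to showing there is no monomial $m\notin I^2$ with $mb_0\in I^2$ and $mb_1\in I^2$. From that point on, however, what you have written is a plan rather than a proof. The passage you yourself identify as ``the main obstacle'' --- showing that any such $m$ forces the forbidden configuration $b_1c,\ b_1d,\ cd$ --- is the entire content of the theorem, and your proposal only asserts that it ``should reduce to'' this configuration (``I anticipate that\dots'', ``the obstruction should reduce to\dots'') without deriving it. That deferral is a genuine gap, not a routine bookkeeping omission.

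Concretely, two steps are missing, and the second is the pivotal idea. First, from $mb_0=M_1M_2g$ with $b_0\mid M_1$ (else $m\in I^2$), condition $(\star)$ gives $M_1=b_0b_1M_1'$, so $m=b_1M_1'M_2g$; one must then kill the case $b_1\mid M_1'M_2g$ by noting it forces $d_{b_1}(mb_1)\ge 3$ while $d_{b_1}(I^2)\le 2$, whence $m\in I^2$. So $d_{b_1}(m)=1$, and the factorization $mb_1=M_kM_rg'$ must use \emph{two} generators $M_k=b_1M_k'$, $M_r=b_1M_r'$ each divisible by $b_1$, giving $m=b_1M_k'M_r'g'$ with $b_1\nmid M_k'M_r'g'$. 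Second --- and this is the step your sketch never reaches --- one must feed this new expression for $m$ back into $mb_0\in I^2$: since $mb_0=b_0b_1M_k'M_r'g'$ carries a single $b_1$, and by $(\star)$ every generator containing $b_0$ also contains $b_1$, one generator $M_v=b_0b_1M_v'$ absorbs both, forcing the cofactor $M_k'M_r'g'$ to lie in $I$, say $M_k'M_r'g'=M_uh$. Only now does a third generator $M_u$ exist on which to run the final analysis: setting $N=\gcd(M_k',M_u)$, if $N\mid\gcd(M_r',M_u)$ then either some variable $c\mid N$ satisfies $c^2\mid M_u$ (the degenerate $c=d$ case, which your sketch does not isolate even though it is the reason the hypothesis allows $c=d$), or $N\mid h$, which yields $M_r'g'\in I$ and hence $m=M_kM_r'g'\in I^2$; arguing symmetrically, one otherwise extracts distinct variables $c\mid\gcd(M_k',M_u)$ and $d\mid\gcd(M_r',M_u)$ with $b_1c\mid M_k$, $b_1d\mid M_r$, $cd\mid M_u$ --- the forbidden triple. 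Without the substitution step producing $M_u$, the case analysis you defer has no starting point, so the proposal cannot be completed as written without supplying essentially the whole of the paper's argument.
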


\begin{proof}
Suppose that $\G(I)=\{M_1, \ldots, M_p\}$ and let $f\in R$ be a polynomial such that $f(b_0+b_1) \in I^2$. By Lemma~\ref{monomial colon 2} we may assume that $f$ is monomial. Then $fb_0, fb_1 \in I^2$. Since $fb_0 \in I^2$, without loss of generality, we may write $fb_0=M_1M_2g$ with $g\in R$. If $b_0 \mid g$, then $f\in I^2$ and the assertion follows. Hence, we may assume that $b_0 \mid M_1$. By our assumptions, $b_1 \mid M_1$ and thus $f=b_1M_1'M_2g$, where $M_1=b_0b_1M_1'$. Since $d_{b_i}(I) \le 1$, we have $b_0, b_1 \nmid M_1'$.

Suppose first that $b_1\mid M_2g$. Then $d_{b_1}(fb_1) \ge 3$. This, together with the fact that $fb_1 \in I^2$ and $d_{b_1}(I) \le 1$, implies that $f \in I^2$ and the assertion is proven.
Thus, we may assume that $b_1 \nmid M_1'M_2g$.

It follows from $fb_1=b_1^2M_1'M_2g \in I^2$ and $d_{b_1}(I^2)\le 2$ that either $f \in I^2$, and we are done, or we may write $fb_1=M_kM_rg'$, with $b_1 \mid M_k$ and $b_1\mid M_r$ and $g' \in R$.
Write $M_k=b_1M_k'$ and $M_r=b_1M_r'$.

Observe that $f=b_1M_1'M_2g=b_1M_k'M_r'g'$ and $b_1 \nmid M_1'M_2g$, so we have that $b_1 \nmid M_k'M_r'g'$. As $fb_0 \in I^2$ and $b_1 \nmid M_k'M_r'g'$, by condition $(\star)$ we must have a decomposition
$$fb_0=(b_0b_1M_v')(\dfrac{1}{M_v'})M_k'M_r'g' $$
 with $b_0b_1M_v'=M_v \in I$ and $\dfrac{1}{M_v'}M_k'M_r'g' \in I$. It follows that $M_k'M_r'g' \in I$. Thus, there exists a monomial generator $M_u$ of $I$ such that $M_k'M_r'g' = M_u h$ for some monomial $h$.

Observe that if $N = \gcd(M_k', M_u)$ divides $\gcd(M_r',M_u)$ then, since $M_k'M_r'g = M_uh$, we must have either $N \mid h$ or there is a variable $c$ dividing $N$ such that $c^2$ divides $M_u$. If $c^2$ divides $M_u$ then, together with the fact that $b_1c$ divides $b_1M_k' = M_k$ and $b_1M_r' = M_r$, we get a contradiction to the hypotheses. Suppose that $N \mid h$. This implies that $M_r'g$ is a multiple of $M_u$, and so $M_r'g \in I$. Particularly, we have $f = b_1M_k'M_r'g = M_k M_r'g \in I^2$. Thus, $N\not= 1$ and $N\nmid \gcd(M_r',M_u)$. By a similar argument, we also have $\gcd(M_r',M_u) \not= 1$ and it does not divide $\gcd(M_k',M_u)$. Therefore, there exist variables $c \mid \gcd(M_k', M_u)$ and $d \mid \gcd(M_r', M_u)$ with $c \not= d$. Then, $cd \mid M_u$,
and together with the fact that $b_1c \mid M_k$ and $b_1d \mid M_r$, we again arrive at a contradiction. Hence, $f \in I^2$ and the proof is complete.
\end{proof}

We provide examples to illustrate the conditions of \autoref{sum 2 regular on power}. Note in particular that  $c,d$ need not be distinct.

\begin{example}
 Let $I=(ab,bc,bd,cd,de,ef)$ be an ideal in the polynomial ring $R=\mathbb{Q}[a,b,c,d,e,f]$. Then $I$ is the edge ideal of the graph below.
   \begin{center}
 	\begin{tikzpicture}
 		\tikzstyle{point}=[inner sep=0pt]
 		\node (a)[point,label=above:$a$] at (-2,0) {$\bullet$};
 		\node (b)[point,label=above:$b$] at (-0.5,0) {$\bullet$};
 		\node (c)[point,label=below:$c$] at (0.5,1) {$\bullet$};
 		\node (d)[point,label=above:$d$] at (1.5,0) {$\bullet$};
 		\node (e)[point,label=above:$e$] at (3,0) {$\bullet$};
 		\node (f)[point,label=above:$f$] at (4.5,0) {$\bullet$};
 		
 		\draw (a.center) -- (b.center);
 		\draw (b.center) -- (d.center);
 		\draw (b.center) -- (d.center);
 		\draw (b.center) -- (c.center);
 		\draw (c.center) -- (d.center);
 		\draw (d.center) -- (e.center);
 		\draw (e.center) -- (f.center);
 	
 	\end{tikzpicture}
 \end{center}
One can see using Macaulay~2~\cite{M2} for example that $\depth R/I=2$ and $\depth R/I^2=1$. In particular, $a+b, f+e$ is a regular sequence on $R/I$. By Theorem~\ref{sum 2 regular on power} we have that $f+e$ is regular on $R/I^2$. Moreover, $a+b$ is not regular on $R/I^2$. Note that $a+b$ violates the condition needed in Theorem~\ref{sum 2 regular on power}.
\end{example}

\begin{example}
Let $I=(a^2b,bc^2,bd^2,cd,de,ef)$ be an ideal in the polynomial ring $R=k[a,b,c,d,e,f]$, where $k$ is a field. We have that $a+b,f+e$ is a regular sequence on $R/I$ by \cite[Theorem 3.11]{FHM}. It turns out that $\depth R/I=2$, while $\depth R/I^2=1$. Notice that $f+e$ is not regular on $R/I^2$ and that $de$ and $d^2$ divide minimal generators of $I$. However, $a+b$ is regular on $R/I^2$, even though $bc,cd$, and $bd$ divide minimal generators of $I$. This means  that our condition in Theorem~\ref{sum 2 regular on power} is not a necessary condition.

A slight modification of this example gives a different result. Let $I=(a^2b,bc^2,bd,cd,de,ef)$ and now notice that $a+b$ is not regular on $R/I^2$, but $f+e$ is regular on $R/I^2$. The latter follows from Theorem~\ref{sum 2 regular on power}.
\end{example}

\begin{example}
Let $I$ be the edge ideal of a path of length $6$, depicted below, on the letters $a, \ldots, g$ in the polynomial ring $R=k[a,b,c,d,e,f,g]$, where $k$ is a field.
\begin{center}
	\begin{tikzpicture}
		\tikzstyle{point}=[inner sep=0pt]
		\node (a)[point,label=above:$a$] at (-2,0) {$\bullet$};
		\node (b)[point,label=above:$b$] at (-0.5,0) {$\bullet$};
		\node (c)[point,label=above:$c$] at (1,0) {$\bullet$};
		\node (d)[point,label=above:$d$] at (2.5,0) {$\bullet$};
		\node (e)[point,label=above:$e$] at (4,0) {$\bullet$};
		\node (f)[point,label=above:$f$] at (5.5,0) {$\bullet$};
		\node (g)[point,label=above:$g$] at (7,0) {$\bullet$};
		
		\draw (a.center) -- (b.center);
		\draw (b.center) -- (c.center);
		\draw (c.center) -- (d.center);
		\draw (d.center) -- (e.center);
		\draw (e.center) -- (f.center);
		\draw (f.center) -- (g.center);
	\end{tikzpicture}
\end{center}
Using Macaulay~2~\cite{M2} we can see that $\depth R/I=3$ and $\depth R/I^2=2$. Theorem~\ref{sum 2 regular on power} shows that $a+b$ and $g+f$ are both regular elements on $R/I^2$. One can also verify that $a+b, g+f$ is a regular sequence on $R/I^2$.

On the other hand if $I$ is the edge ideal of a path of length $3$ on the letters $a, \ldots, d$, even though the elements $a+b$ and $d+c$ are regular on $R/I^2$, they do not form a regular sequence on $R/I^2$. The reason is that $b,c \mid bc\in I$, see also Theorem~\ref{sequences for square} below.
\end{example}

We conclude this section by showing that binomial forms satisfying the conditions of Theorem~\ref{sum 2 regular on power}, whose supports are disjoint, can be combined to get a regular sequence on $R/I^2$, provided that there is sufficient distance between them, as was seen in the prior examples.

\begin{theorem} \label{sequences for square}
Let $I$ be a monomial ideal and suppose that $\{b_{1,0}, b_{1,1}\}, \ldots, \{b_{s,0}, b_{s,1}\}$ are sets of disjoint variables that satisfy the conditions of Theorem~\ref{sum 2 regular on power}. Moreover, suppose that for all $1\le j <r \le s$, there does not exist  $M\in \G(I)$ such that $b_{j,1} b_{r,1} \mid M$. Let $f_i=b_{i,0}+b_{i,1}$ for $1\le i \le s$. Then $f_1, \ldots, f_s$ is a regular sequence on $R/I^2$ and $\depth(R/I^2) \ge s$.
\end{theorem}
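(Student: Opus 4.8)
The plan is to argue by induction on $s$, the base case $s=1$ being exactly Theorem~\ref{sum 2 regular on power}. For the inductive step, I would first invoke Theorem~\ref{sum 2 regular on power} to conclude that $f_1 = b_{1,0}+b_{1,1}$ is a nonzerodivisor on $R/I^2$, which is legitimate because the pair $\{b_{1,0},b_{1,1}\}$ satisfies the hypotheses of that theorem by assumption. It then suffices to show that $f_2,\dots,f_s$ forms a regular sequence on $R/(I^2,f_1)$, since a nonzerodivisor followed by a regular sequence on the quotient is again a regular sequence; the conclusion $\depth(R/I^2)\ge s$ will then follow because $f_1,\dots,f_s$ is a homogeneous regular sequence contained in the maximal ideal.

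The key reduction is as follows. Since $b_{1,0}$ is a variable appearing in none of $f_2,\dots,f_s$ (the supports are disjoint), the quotient map $R \to R'' := R/(f_1)$ identifies $R''$ with the polynomial ring in all variables except $b_{1,0}$, via $b_{1,0}\mapsto -b_{1,1}$. Under this substitution every monomial maps to $\pm$ a monomial, so the image $\bar I$ of $I$ is again a monomial ideal in $R''$, and because the substitution is a ring homomorphism the image of $I^2$ is $(\bar I)^2$. Hence $R/(I^2,f_1)\cong R''/(\bar I)^2$, with $f_i$ mapping to $f_i$ for $i\ge 2$. The problem is thereby reduced to showing that $f_2,\dots,f_s$ is a regular sequence on $R''/(\bar I)^2$, which I would obtain from the inductive hypothesis applied to the monomial ideal $\bar I$ and the $s-1$ disjoint pairs $\{b_{2,0},b_{2,1}\},\dots,\{b_{s,0},b_{s,1}\}$.

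What must be checked is that $\bar I$ together with these pairs still satisfies all the hypotheses of the theorem. I would first record that every minimal generator of $\bar I$ is the image of a minimal generator of $I$, and that for any variable $x\notin\{b_{1,0},b_{1,1}\}$ one has $x\mid \bar M$ if and only if $x\mid M$, whereas $b_{1,1}\mid \bar M$ if and only if $b_{1,0}\mid M$ or $b_{1,1}\mid M$. From this, condition $(\star)$ and the bounds $d_{b_{i,j}}\le 1$ transfer immediately for $i\ge 2$, since those variables are untouched by the substitution, and the distance condition for pairs $2\le j<r\le s$ lifts verbatim to $I$.

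The main obstacle is verifying the remaining Theorem~\ref{sum 2 regular on power} condition for $\bar I$: that for each $i\ge 2$ there are no variables $c,d$ with $b_{i,1}c$, $b_{i,1}d$, $cd$ all dividing minimal generators of $\bar I$. Whenever $c,d\neq b_{1,1}$ the divisibilities lift verbatim to $I$ and contradict the corresponding hypothesis on $I$. The delicate case is when one of $c,d$ equals the newly created variable $b_{1,1}$: then $b_{i,1}b_{1,1}\mid \bar M$ forces $b_{i,1}\mid M$ together with $b_{1,0}\mid M$ or $b_{1,1}\mid M$. If $b_{1,1}\mid M$ this contradicts the hypothesis that no minimal generator of $I$ is divisible by $b_{1,1}b_{i,1}$, and if instead $b_{1,0}\mid M$ then condition $(\star)$ (part (2)) for the pair $\{b_{1,0},b_{1,1}\}$ forces $b_{1,1}\mid M$, returning to the previous contradiction. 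This is precisely the point where the extra distance hypothesis on the pairs is indispensable. Once all hypotheses are verified for $\bar I$, the inductive hypothesis completes the proof.
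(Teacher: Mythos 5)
Your proof is correct, and it reaches the result by a genuinely different route than the paper. The paper does not induct through the honest quotient $R/(f_1)$; it stays inside the initially-regular-sequence framework of \cite{FHM}: using \cite[Lemma~2.3]{FHM} it computes the iterated initial ideal $J=\ini(I^2,f_1,\dots,f_{s-1})$ in one shot as $\langle b_{1,0},\dots,b_{s-1,0}\rangle + (\ini(I,f_1,\dots,f_{s-1}))^2$, reduces the regular-sequence claim to the regularity of the single element $f_s$ on $R/J$ via \cite[Corollary~2.6, Lemma~2.4]{FHM}, and then verifies the hypotheses of Theorem~\ref{sum 2 regular on power} for the image of $\ini(I,f_1,\dots,f_{s-1})$ in $R/(b_{1,0},\dots,b_{s-1,0})$. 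Your substitute for this machinery --- the observation that $R/(f_1)$ is again a polynomial ring under $b_{1,0}\mapsto -b_{1,1}$, that $I$ re-monomializes to $\bar{I}$ with $I^2$ mapping onto $\bar{I}^2$ (condition $(\star)(1)$ guaranteeing the images are, up to sign, monomials), followed by straight induction on $s$ --- is sound and entirely elementary, requiring no Gr\"obner-basis input beyond Theorem~\ref{sum 2 regular on power} itself. The heart of the two arguments is nevertheless the same computation: checking that the conditions of Theorem~\ref{sum 2 regular on power} survive the substitution $b_{1,0}\mapsto b_{1,1}$, where the only delicate case is one of the variables $c,d$ equaling the image variable $b_{1,1}$ (in the paper's notation, $c=b_{i,1}$ for some $i\le s-1$), resolved exactly as you do via condition~$(\star)(2)$ together with the hypothesis that no generator is divisible by $b_{j,1}b_{r,1}$. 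As for what each approach buys: yours is shorter and self-contained, but the re-monomialization trick is special to binomial forms --- quotienting by a trinomial $b_0+b_1+b_2$ does not return a monomial ideal --- so it is the paper's initial-ideal formalism, not yours, that carries over to the trinomial sequences of Section~\ref{sec.tri}; conversely, the paper's proof simultaneously exhibits $f_1,\dots,f_s$ as an initially regular sequence, which is the structural notion the rest of the paper is organized around.
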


\begin{proof} Let $J = \ini(I^2, f_1, \dots, f_{s-1})$.
By \cite[Corollary~2.6, Lemma~2.4]{FHM} and induction, it suffices to show that $f_s$ is regular on $R/J$. We shall fix some notation. For any monomial $M \in R$, set
$$\widehat{M}=\dfrac{b_{1,1}^{d_{b_{1,0}}(M)} \cdots b_{s-1,1}^{d_{b_{s-1,0}}(M)} M}{b_{1,0}^{d_{b_{1,0}}(M)} \cdots b_{s-1,0}^{d_{b_{s-1,0}}(M)}}.$$

It follows from \cite[Lemma~2.3]{FHM} that
\begin{eqnarray*}
J&=&\langle b_{1,0}, \ldots, b_{s-1,0}, \widehat{N_j} \mid N_j \in \G(I^2)\rangle\\
&=&\langle b_{1,0}, \ldots, b_{s-1,0}, (\ini(I,f_1, \ldots, f_{s-1}))^2\rangle.
\end{eqnarray*}
Let $R' = R/(b_{1,0}, \dots, b_{s-1,0})$. To show that $f_s$ is regular on $R/J$ it suffices to show that $\{b_{s,0}, b_{s,1}\}$ satisfy the conditions of Theorem~\ref{sum 2 regular on power} for the image of $H=\ini(I,f_1, \ldots, f_{s-1})$ in $R'$. By abuse of notation, in the rest of the proof, for a monomial $M$ in $R$ we shall use $M$ to denote also the image of $M$ in $R'$ if $M$ does not contain any of the variables $\{b_{1,0}, \dots, b_{s-1,0}\}$.

Notice that since $d_{b_{s,0}}(I), d_{b_{s,1}}(I)\le 1$ and $\{b_{s,0}, b_{s,1}\} \cap \{b_{1,0}, b_{1,1}, \ldots, b_{s-1,0}, b_{s-1,1}\}=\emptyset$, we have $d_{b_{s,0}}(H) \le 1$ and $d_{b_{s,1}}(H) \le 1$, by \cite[Lemma~3.7]{FHM}. Also, if $b_{s,0}$ divides the image of $M$ in $R'$, then $b_{s,1}$ divides the image of $M$ as well.

It remains to establish the last condition of Theorem~\ref{sum 2 regular on power} for $\{b_{s,0},b_{s,1}\}$. Suppose, by contradiction, that there exist variables $c,d$ such that $b_{s,1}c \mid \widehat{M_1}$, $b_{s,1}d \mid \widehat{M_2}$, and $cd \mid \widehat{M_3}$, where $M_1, M_2, M_3$ are minimal monomial generators of $I$. If $\{c,d\} \cap \{ b_{1,1}, \ldots, b_{s-1, 1}\}=\emptyset$, then $c\mid M_1, M_3$, $d \mid M_2, M_3$, and $b_{s,1}\mid M_1,M_2$, contradicting the assumptions on $\{b_{s,0}, b_{s,1}\}$.  Suppose instead that one of $c$ or $d$ coincides with $b_{i,1}$ for some $i \le s-1$. Without loss of generality suppose $c=b_{i,1}$ for some $i\le s-1$. Then since $b_{i, 1} \mid \widehat{M_1}$, then either $b_{i,1} \mid M_1$ or $b_{i,0} \mid M_1$. In the latter case, we also have $b_{i,1} \mid M_1$, by the hypotheses (condition $(\star)$). Therefore, $b_{s,1},b_{i,1} \mid M_1$, contrary to our assumptions. Thus no such variables $c,d$ can exist, showing that, by Theorem~\ref{sum 2 regular on power}, $f_s$ is regular on $R/J$.
\end{proof}

\begin{corollary}\label{leaves bound}
	If $I$ is the edge ideal of a graph $G$ and $\{c_1, \ldots, c_r\}$ is a set of vertices in $G$ that are leaves such that $d(c_i, c_j) \ge 4$ for $i \neq j$, then $\depth(R/I^2) \ge r$.
\end{corollary}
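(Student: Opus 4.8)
The plan is to deduce Corollary~\ref{leaves bound} directly from Theorem~\ref{sequences for square} by checking that a collection of far-apart leaves gives rise to a family of binomial linear sums satisfying all the hypotheses of that theorem. For each leaf $c_i$, let $b_{i,1} = c_i$ and let $b_{i,0}$ be the unique neighbor of $c_i$ in $G$; set $f_i = b_{i,0} + b_{i,1}$. The whole argument reduces to verifying three things: (a) each pair $\{b_{i,0}, b_{i,1}\}$ satisfies condition $(\star)$ and the additional hypothesis of Theorem~\ref{sum 2 regular on power}; (b) the $s$ pairs consist of pairwise disjoint variables; and (c) for $j \neq r$ there is no minimal generator $M \in \G(I)$ with $b_{j,1} b_{r,1} = c_j c_r \mid M$. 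Once these are in hand, Theorem~\ref{sequences for square} immediately yields that $f_1, \dots, f_r$ is a regular sequence on $R/I^2$, whence $\depth(R/I^2) \ge r$.

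\smallskip

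For step (a), since $I$ is an edge ideal it is square-free, so condition $(1)$ of $(\star)$ holds automatically. Because $c_i$ is a leaf, it has exactly one neighbor $b_{i,0}$, and the only minimal generator of $I$ divisible by $b_{i,0}$ in the role of ``$b_0$'' that could fail condition $(2)$ must actually be covered: every edge through $b_{i,0}$ that uses $c_i$ is fine, and one must use the leaf structure to confirm the required $b_i$ (namely $c_i$) divides the relevant generator. Here I would take $b_0 = b_{i,0}$ and $b_1 = c_i = b_{i,1}$; condition $(2)$ of $(\star)$ requires that for every generator divisible by $b_0 = b_{i,0}$ the variable $b_1 = c_i$ also divides it, which is the same as saying $b_{i,0}$ has $c_i$ as its only neighbor---this is precisely the statement that $c_i$ is a leaf \emph{and} $b_{i,0}$ has degree one, which need not hold. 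So the correct choice is to reverse roles and instead appeal to the fact that $c_i$ being a leaf forces the edge $\{b_{i,0}, c_i\}$ to be the unique generator divisible by $c_i$; taking the leaf $c_i$ as ``$b_0$'' then makes condition $(2)$ automatic since $b_{i,0}$ divides that single generator. The hypothesis of Theorem~\ref{sum 2 regular on power} (no $c,d$ with $b_{1}c, b_{1}d, cd$ all dividing generators) likewise follows because $b_1 = b_{i,0}$'s incident edges are edges of a graph, which forces any such $c, d$ to form a triangle through $b_{i,0}$; the distance hypothesis rules this out.

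\smallskip

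For steps (b) and (c), the distance condition $d(c_i, c_j) \ge 4$ does the work. Two leaves at distance at least $4$ are certainly distinct and have distinct neighbors, so their associated variable pairs $\{b_{i,0}, b_{i,1}\}$ and $\{b_{j,0}, b_{j,1}\}$ are disjoint; indeed if they shared a variable the two leaves would be at distance at most $2$. For step (c), a minimal generator $M = c_j c_r$ (an edge joining the two leaves) would give $d(c_j, c_r) = 1 < 4$, a contradiction, and since $I$ is an edge ideal its generators are degree-two square-free monomials, so no generator is divisible by $c_j c_r$ unless it equals $c_j c_r$.

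\smallskip

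\textbf{The main obstacle} I anticipate is a careful bookkeeping of which variable plays the role of $b_0$ versus $b_1$ in condition $(\star)$ and in the hypothesis of Theorem~\ref{sum 2 regular on power}, together with confirming that the distance bound $4$ is exactly what is needed---$\ge 4$ rather than $\ge 3$---to simultaneously guarantee disjointness of the supports (which needs the neighbors to differ, roughly distance $\ge 3$) and the nonexistence of a generator $b_{j,1}b_{r,1} \mid M$ linking two of the chosen leaves through shared structure in $\G(I)$. Everything else is a direct translation of graph-theoretic distance into divisibility statements about the square-free degree-two generators of $I$, after which Theorem~\ref{sequences for square} applies verbatim.
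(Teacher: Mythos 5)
Your overall route is the paper's own: pair each leaf $c_i$ with its unique neighbor $b_i$, take $f_i = c_i + b_i$, and invoke Theorem~\ref{sequences for square}. But two steps as written do not hold up. First, your bookkeeping of roles is inconsistent in a way that matters. Once you (correctly) reverse roles so that the leaf $c_i$ plays $b_0$ in condition $(\star)$, the neighbor $b_i$ plays the role of $b_{i,1}$, and the cross-condition of Theorem~\ref{sequences for square} is that no $M \in \G(I)$ is divisible by $b_{j,1}b_{r,1}$, i.e., by the product of two \emph{neighbors} $b_jb_r$. In step (c) you instead verify that no generator is divisible by $c_jc_r$, the product of the two leaves---trivially true, but not the needed hypothesis. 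The actual content, and the reason the hypothesis is $d(c_i,c_j)\ge 4$ rather than $\ge 2$ or $\ge 3$, is that $b_jb_r \in E_G$ would produce the path $c_j,\, b_j,\, b_r,\, c_r$ and force $d(c_j,c_r)\le 3$; this is exactly the observation the paper's proof records. Your closing uncertainty about why $\ge 4$ is the right bound is a symptom of this mislabeling.

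Second, and more seriously, your justification of the remaining hypothesis of Theorem~\ref{sum 2 regular on power} is false: you claim the distance hypothesis rules out variables $c,d$ with $b_1c$, $b_1d$, $cd$ dividing generators (for a graph, a triangle through $b_1 = b_i$). It does not: $d(c_i,c_j)\ge 4$ constrains only pairs of \emph{distinct leaves} and says nothing about the local structure at a single neighbor $b_i$; for $r=1$ it is vacuous. Indeed this condition cannot be deduced from the corollary's hypotheses at all: if $G$ is a triangle $\{x,y,z\}$ with a pendant vertex $c$ attached to $x$, then $c$ is a leaf, yet $xyz \notin I^2$ while $xyz\cdot(x,y,z,c) \subseteq I^2$, so the maximal ideal is associated to $I^2$, $\depth(R/I^2)=0$, and $c+x$ is a zerodivisor on $R/I^2$. (For what it is worth, the paper's own one-line proof silently assumes that each pair $\{c_i,b_i\}$ satisfies the conditions of Theorem~\ref{sum 2 regular on power}; as this example shows, the triangle-free requirement on each neighbor $b_i$ is genuinely needed and does not follow from the leaf-and-distance hypotheses, so your instinct to address it was sound---but it must be imposed as a hypothesis, not derived from the distance condition.)
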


\begin{proof}
	This follows immediately from Theorem~\ref{sequences for square} after noting that if $b_i$ is the unique neighbor of $c_i$, then $b_ib_j$ is not an edge of $G$ for all $i, j$. Thus, $c_1+b_1, c_2+b_2, \ldots , c_r+b_r$ is a regular sequence on $R/I^2$ by Theorem~\ref{sequences for square}.
	\end{proof}

The next example shows how the bound in Corollary~\ref{leaves bound} can be sharp.

\begin{example}
Let $I=(x_1x_2, x_2x_3,x_3x_4,x_4x_5, x_4x_6,x_6x_7,x_7x_8)$ be the edge ideal of the graph below in the polynomial ring $R=\mathbb{Q}[x_1, \ldots, x_8]$.

\begin{center}
	\begin{tikzpicture}
		\tikzstyle{point}=[inner sep=0pt]
		\node (a)[point,label=above:$x_1$] at (-2,1) {$\bullet$};
		\node (b)[point,label=below:$x_2$] at (-2,0) {$\bullet$};
		\node (c)[point,label=below:$x_3$] at (-0.5,0) {$\bullet$};
		\node (d)[point,label=below:$x_4$] at (1,0) {$\bullet$};
		\node (e)[point,label=above:$x_5$] at (1,1) {$\bullet$};
		\node (f)[point,label=below:$x_6$] at (2.5,0) {$\bullet$};
		\node (g)[point,label=below:$x_7$] at (4,0) {$\bullet$};
		\node (h)[point,label=above:$x_8$] at (4,1) {$\bullet$};
		
		\draw (a.center) -- (b.center);
		\draw (b.center) -- (c.center);
		\draw (c.center) -- (d.center);
		\draw (d.center) -- (e.center);
		\draw (d.center) -- (f.center);
		\draw (f.center) -- (g.center);
		\draw (g.center) -- (h.center);
	\end{tikzpicture}
\end{center}
Notice that by \cite[Theorem~3.11]{FHM} the sequence $x_1+x_2, x_5+x_4, x_8+x_7$ is regular on $R/I$ and by Corollary~\ref{leaves bound} and its proof, the same sequence is regular on $R/I^2$. One can confirm with Macaulay~2~\cite{M2} that $\depth R/I=\depth R/I^2=3$.
\end{example}

\section{Initially regular sequences of trinomial linear sums} \label{sec.tri}

In this section, we focus on linear sums consisting of three elements, trinomial linear sums, and investigate when sequences of such linear sums are initially regular with respect to the square of a given monomial ideal.

We begin by examining the initial ideal of a monomial ideal $I$ and a trinomial linear form.  We will use notation similar to that used in previous sections. Particularly, for fixed variables $b_0$ and $b_1$ in the polynomial ring $R$, and $M \in \G(I)$, set
 $$\widehat{M}=\dfrac{b_1^{d_{b_0}(M)}M}{b_0^{d_{b_0}(M)}}.$$
 For any $n\in \NN$ we use the notation $[n]$ to denote the set of integers from $1$ to $n$, i.e. $[n]=\{1, \ldots, n\}$.

\begin{lemma}\label{I, a+b+c, general}
Let $I$ be a monomial ideal. Let $b_0,b_1,b_2$ be variables satisfying condition $(\star)$. Furthermore, assume that $b_1b_2$ does not divide any $M\in\G(I)$.
If $>$ is a term order such that $b_0>b_1>b_2$, then
$$\ini(I,b_0+b_1+b_2)=\langle b_0, \widehat{M},\lcm(X,M')b_2^2\mid  M, b_1X, b_0b_2M'\in \G(I)\rangle.$$
\end{lemma}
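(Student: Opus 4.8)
The plan is to produce an explicit Gr\"obner basis of $(I,b_0+b_1+b_2)$ with respect to $>$ and to read off its initial ideal from the leading terms. Write $f=b_0+b_1+b_2$, so $\ini(f)=b_0$. Since condition $(\star)$ forces $d_{b_0}(I)\le 1$, for every $M\in\G(I)$ we have $d_{b_0}(M)\in\{0,1\}$; accordingly set $g_M:=M$ when $b_0\nmid M$ and $g_M:=(b_1+b_2)(M/b_0)$ when $b_0\mid M$, so that $\ini(g_M)=\widehat{M}$ in either case. I claim that
\[
\mathcal{B}=\{f\}\cup\{g_M\mid M\in\G(I)\}\cup\{\lcm(M',X)b_2^2\mid b_1X,\,b_0b_2M'\in\G(I)\}
\]
is a Gr\"obner basis of $(I,f)$; granting this, its leading terms are exactly the generators on the right-hand side of the asserted identity. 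A preliminary observation classifies $\G(I)$: by the second part of $(\star)$ a generator divisible by $b_0$ has the form $b_0b_1(\cdots)$ or $b_0b_2M'$, while the hypothesis that $b_1b_2$ divides no generator yields $b_2\nmid X$, $b_1\nmid M'$ and $b_2\nmid M'$ whenever $b_1X,b_0b_2M'\in\G(I)$. Also $\mathcal{B}$ generates $(I,f)$, since $M=(M/b_0)f-g_M$ when $b_0\mid M$.

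The inclusion of the right-hand side into $\ini(I,f)$ is the routine half. Indeed $b_0=\ini(f)$; for $\widehat{M}$, the relation $b_0\equiv-(b_1+b_2)\pmod f$ gives $g_M\equiv\pm M\pmod f$, so $g_M\in(I,f)$ with $\ini(g_M)=\widehat{M}$; and for the trinomial term the congruence
\[
b_2^2\lcm(M',X)\equiv -\,b_0b_2\lcm(M',X)-b_1b_2\lcm(M',X)\pmod f
\]
exhibits $b_2^2\lcm(M',X)$ as an element of $(I,f)$, since $b_0b_2\lcm(M',X)$ is a multiple of $b_0b_2M'$ and, because $b_2\nmid X$, the monomial $b_1b_2\lcm(M',X)$ is a multiple of $b_1X$; being a monomial it equals its own leading term.

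The reverse inclusion is the substance of the proof and follows from Buchberger's criterion for $\mathcal{B}$, with Lemma~\ref{S-resultant tricks} performing every $S$-resultant computation. The $S$-resultant of two monomials of $\mathcal{B}$ vanishes; for a monomial $m\in\mathcal{B}$ one has $S(f,m)=-(b_1+b_2)m$ by part~(c), which reduces by $m$; for $b_0\mid M$, the resultant $S(f,g_M)$ reduces to $0$ by cancelling its $b_0$-term against $f$ and then reducing twice by $g_M$; and two elements $g_M,g_N$ with $b_0\mid M,N$ share the factor $b_1+b_2$, so $S(g_M,g_N)=0$ by part~(d). The only delicate resultants are $S(g_M,N)$ with $b_0\mid M$ and $N$ a monomial of $\mathcal{B}$, which by part~(b) equals, up to sign, $b_2\,\lcm(N,\ini(g_M))/b_1$. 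When $M=b_0b_2M'$ and $N=b_1X$ this collapses, exactly because $b_2\nmid X$ and $b_2\nmid M'$, to $b_2^2\lcm(M',X)$, a listed element of $\mathcal{B}$; in every other configuration the resulting monomial is divisible by $N$ (or by $b_0$) and so reduces to $0$.

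The main obstacle is this completeness step: one must confirm that no $S$-resultant forces a generator outside the three listed families, and it is precisely here that the hypotheses are indispensable. The bound $d_{b_0}(I)\le 1$ keeps each $g_M$ linear in the factor $b_1+b_2$, which is what makes part~(d) applicable to $S(g_M,g_N)$; the second part of $(\star)$ is exactly what excludes a generator $b_0M''$ with $M''$ free of $b_1$ and $b_2$, since for such an $M$ the resultant $S(g_M,b_1X)=b_2\lcm(X,M'')$ would be a monomial of $b_2$-degree one lying in none of the listed families; and the hypothesis $b_1b_2\nmid M$ supplies the identity $\lcm(b_2M',X)=b_2\lcm(M',X)$, which is what forces the surviving resultants to carry the factor $b_2^2$ and to land on $\lcm(M',X)b_2^2$ instead of a spurious higher power of $b_2$.
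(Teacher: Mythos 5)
Your proposal is correct in substance and follows essentially the same route as the paper: an explicit Gr\"obner basis for $(I,f)$ verified through Buchberger's criterion, using the same key elements. Your $g_M=(b_1+b_2)(M/b_0)$ (for $b_0\mid M$) is the paper's $p_{ri}=M_{ri}'b_r(b_1+b_2)$, and your monomials $\lcm(M',X)b_2^2$ are the paper's $q_{jX}$; the case analysis of $S$-resultants (monomial pairs vanish, $S(g_M,g_N)=0$ by part (d), $S(M,p_{2j})$ producing the $q$-elements, everything else reducing via divisibility) matches the paper's types (a)--(c). Your one structural deviation is to drop the original $b_0$-divisible generators from the basis via the identity $M=(M/b_0)f-g_M$, a harmless streamlining: the paper retains them, but their leading terms are absorbed by $b_0$, and keeping them only adds the easy computation $S(M_{ri},f)=p_{ri}$.

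There is one small loose end you should patch. You never exclude the case $b_0\mid X$ for a generator $b_1X\in\mathcal{G}(I)$, whereas the paper explicitly notes that $q_{jX}$ is then redundant and assumes $b_0\nmid X$ from that point on. When $b_0\mid X$, the element $m=\lcm(M',X)b_2^2$ of your basis $\mathcal{B}$ is divisible by $b_0=\ini(f)$, so your blanket claim that $S(f,m)=-(b_1+b_2)m$ by part (c) fails for this pair: part (c) requires the leading terms to be coprime. Here instead $S(f,m)=(m/b_0)(b_1+b_2)$, which is not a monomial; it does reduce to zero --- writing $X=b_0X''$ one gets $S(f,m)=\lcm(M',X'')b_2^2\,(b_1+b_2)$, a monomial multiple of $g_{b_0b_2M'}=M'b_2(b_1+b_2)$ --- or one can simply discard these redundant monomials at the outset, as the paper does. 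Relatedly, your parenthetical ``(or by $b_0$)'' is not by itself a valid reduction-to-zero argument, since a monomial multiple of $b_0$ reduces by $f$ to a nonzero polynomial requiring further reduction; fortunately it is never needed, because in every other configuration of $S(g_M,N)$ the monomial $b_2\lcm(N,\ini(g_M))/b_1$ is genuinely divisible by $N$ (either $b_1\nmid N$, or $N=b_1X$ with $\ini(g_M)=M_1'b_1^2$ carrying $b_1$ to order two). With these two points tidied up, your argument is complete and coincides with the paper's.
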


\begin{proof}

	Let $\{M_{11}, \dots, M_{1i_1}\}$ and $\{M_{21}, \dots, M_{2i_2}\}$ be the monomial generators of $I$ that are divisible by $b_0b_1$ and $b_0b_2$, respectively. Set $M_{1j}' = M_{1j}/b_0b_1$ and $M_{2j}' = M_{2j}/b_0b_2$. Let $\{M_3, \dots, M_p\}$ be the remaining monomial generators of $I$.

Let $f=b_0+b_1+b_2$ and for $r\in[2]$, $i\in[i_r]$, and $j\in [i_2]$ we fix the following notation
$$p_{ri}=M_{ri}'b_r(b_1+b_2) \quad \text{ and } \quad q_{jX}=\lcm(X,M_{2j}')b_2^2,$$ where $b_1X\in \mathcal{G}(I)$.

We claim that $$G=\{M_{ri}, M_k, f, p_{ri}, q_{jX} ~\big|~  3\le k\le p, r\in[2],  i\in [i_r], j\in [i_2]\}$$ is a Gr\"{o}bner basis for $(I,f)$. Note that if $b_0 \mid X$, then $q_{jX}$ is redundant in the set $G$, so we may assume $b_0 \nmid X$ from now on.

Observe that for $r\in [2]$ and $i\in [i_r]$, we have
$$S(M_{ri}, f)=\frac{\lcm(b_0b_rM_{ri}', b_0)}{b_0}(b_1+b_2)=b_rM_{ri}'(b_1+b_2)=p_{ri}\in G.$$
Also, for any $M=b_1X\in \G(I)$ and $j\in [i_2]$, by Lemma~\ref{S-resultant tricks}~(b) we have
$$S(M, p_{2j})=S(b_1X, M_{2j}'b_2(b_1+b_2))=\frac{\lcm(b_1X,M_{2j'}b_2)}{b_1}b_2.$$
Since $b_1b_2$ does not divide any $N\in\G(I)$, it follows that
$$S(M, p_{2j})=\lcm(X, M_{2j}'b_2)b_2=\lcm(X, M_{2j}')b_2^2=q_{jX}.$$
Thus, all elements in $G$ indeed appear as we proceed through Buchberger's algorithm. It remains to show that all  the $S$-resultants among elements of $G$ reduce to $0$ modulo $G$.

Notice that for any monomials $M,N\in\G(I)$, $S(M,N)=0$ and $S(M, f)$ is redundant unless $b_0\mid M$. Furthermore, if $b_0 \mid M$, then $M=M_{ri}$, for some $r\in [2]$ and $i\in[i_r]$, and, as seen above, $S(M_{ri}, f)=p_{ri}$ reduces to 0 modulo $G$. Since $q_{jX}$ is monomial, $S(q_{jX}, f)$ is redundant. Hence, it remains to consider $S$-resultants of the following forms: (a) $S(M, p_{ri})$, where $M$ is monomial in $G$,  (b) $S(p_{ri}, p_{kj})$, and (c) $S(p_{ri}, f)$, with $r,i,k,j\in [2]$.

{\textbf{\textit{Type} (a):}} $S(M,p_{ri})$. For any monomial $M$ in $G$, we have that
$$S(M, p_{ri})=\frac{\lcm(M, M_{ri}'b_rb_1)}{b_1}b_2,$$
which is redundant unless $b_1\mid M$. If $M = b_1X\in \G(I)$ and $r=2$ then, as seen above, $S(b_1X, p_{2i})=q_{iX}$, reducing to 0 modulo $G$. If $M = b_1X$ and $r=1$, then
$$S(M, p_{1i})=\frac{\lcm(b_1X,M_{1i}'b_1^2)}{b_1}b_2,$$
which is a multiple of $b_1X$ and so reduces to 0 modulo $G$ as well. If $M = q_{jX}$, then  $S(M,p_{ri})$ becomes redundant since $b_1 \nmid q_{jX}$.

{\textbf{\textit{Type} (b):}} $S(p_{ri},p_{kj})$. By Lemma~\ref{S-resultant tricks}~(d) we have $S(p_{ri}, p_{kj})=0$.

{\textbf{\textit{Type} (c):}} $S(p_{ri},f)$. By Lemma~\ref{S-resultant tricks}~(c) we have $$S(p_{ri}, f)=b_0M_{ri}'b_rb_2-b_rM_{ri}'b_1(b_1+b_2)=M_{ri}b_2-b_1p_{ri},$$ which clearly reduces to $0$ modulo $G$, and the proof is complete.
\end{proof}

The next lemma focuses particularly on the square of a monomial ideal and a linear sum of three variables. This is the main technical part of our work in this section.

\begin{lemma}\label{ini I^2 a+b+c, general}
Let $I$ be a monomial ideal  in a polynomial ring $R$. Let $b_0,b_1,b_2$ be variables such that the following conditions hold:
\begin{enumerate}[label=$(\alph*)$]
\item $d_{b_i}(I) \le 1$ for $0 \le i \le 2$;
\item the only monomial generators in $I$ which $b_0$ divides are $b_0b_1$ and $b_0b_2$;
\item $b_1b_2 \nmid M$ for all $M \in \G(I)$;
\item there do not exist any variables $z_1, z_2$ such that $b_1z_1, b_1z_2$ and $z_1z_2$ divide monomial generators of $I$;
\item there do not exist any variables $x_1, x_2$, such that $b_1x_1, x_1x_2, x_2b_2$ divide monomial generators of $I$.
\end{enumerate}
Then, we have $$\ini(I^2, b_0+b_1+b_2)=\langle b_0,(\ini(I, b_0+b_1+b_2))^2 \rangle.$$
\end{lemma}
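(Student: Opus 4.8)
The plan is to prove the equality by a double inclusion, writing $f = b_0+b_1+b_2$ and $L = \ini(I,f)$, whose generators are supplied explicitly by Lemma~\ref{I, a+b+c, general} (conditions $(a)$, $(b)$, $(c)$ furnish its hypotheses, namely $(\star)$ together with $b_1b_2\nmid M$). Under condition $(b)$ the only generators of $I$ divisible by $b_0$ are $b_0b_1$ and $b_0b_2$, so that $\widehat{b_0b_1}=b_1^2$, $\widehat{b_0b_2}=b_1b_2$, and the trinomial terms collapse to $Xb_2^2$; hence
$L = \langle b_0,\, b_1^2,\, b_1b_2,\, M,\, Xb_2^2\rangle$, where $M$ ranges over the generators of $I$ not divisible by $b_0$ and $b_1X$ ranges over those divisible by $b_1$. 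The goal is then to show $\ini(I^2,f) = \langle b_0, L^2\rangle$.

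The inclusion $\langle b_0, L^2\rangle \subseteq \ini(I^2,f)$ is the easy one. First, $b_0 = \ini(f) \in \ini(I^2,f)$. Second, for any ideal $J$ one has $(\ini(J))^2 \subseteq \ini(J^2)$, since $\ini(h_1)\ini(h_2)=\ini(h_1h_2)$; applying this with $J=(I,f)$ and using $(I,f)^2 = I^2 + f\,(I,f) \subseteq (I^2,f)$ gives $L^2 = (\ini(I,f))^2 \subseteq \ini((I,f)^2) \subseteq \ini(I^2,f)$. Conversely, since $I^2\subseteq I$ we have $(I^2,f)\subseteq(I,f)$, so $\ini(I^2,f)\subseteq\ini(I,f)=L$ by monotonicity. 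Thus every monomial of $\ini(I^2,f)$ divisible by $b_0$ already lies in $\langle b_0\rangle$, and the whole lemma reduces to the claim that every monomial $m\in\ini(I^2,f)$ with $b_0\nmid m$ lies in $L^2$.

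To prove this claim I would run Buchberger's algorithm on the generating set $\{f\}\cup\G(I^2)$, exactly in the spirit of the proof of Lemma~\ref{I, a+b+c, general}, and read off $\ini(I^2,f)$. By condition $(b)$ the generators of $I^2$ divisible by $b_0$ are precisely $(b_0b_1)^2$, $(b_0b_1)(b_0b_2)$, $(b_0b_2)^2$, and $(b_0b_1)M_k$, $(b_0b_2)M_k$ for the generators $M_k$ of $I$ not divisible by $b_0$; these are the only ones with a nontrivial $S$-resultant against $f$. Taking $S$-resultants with $f$ repeatedly peels off the factors of $b_0$ and, via Lemma~\ref{S-resultant tricks}, yields elements with leading terms $b_1^4=(b_1^2)^2$, $b_1^3b_2=(b_1^2)(b_1b_2)$, $b_1^2b_2^2=(b_1b_2)^2$, $b_1^2M_k=(b_1^2)M_k$, and $b_1b_2M_k=(b_1b_2)M_k$ (together with the purely monomial generators $M_jM_k$ and the trinomial $b_2^2$-terms inherited from $L$). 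Each of these leading terms is visibly a product of two generators of $L$, hence lies in $L^2$. The substance of the lemma is therefore the verification that the \emph{remaining} $S$-resultants---those among the peeled elements, between them and the monomial generators, and the ``cross'' resultants created by the $b_2^2$-terms coming from the trinomial nature of $f$---all reduce to zero modulo the accumulated set, so that no leading term outside $L^2$ is ever introduced.

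This verification is the main obstacle, and it is exactly where conditions $(d)$ and $(e)$ enter; they are the second-power analogues of the hypothesis of Theorem~\ref{sum 2 regular on power}. Intuitively, reducing modulo $f$ amounts to substituting $b_0\mapsto-(b_1+b_2)$, so the dangerous reductions are those in which a $b_1$-divisible generator interacts with another generator through a shared variable and leaves a monomial divisible by only a single generator of $L$. Condition $(d)$ (no $z_1,z_2$ with $b_1z_1,\,b_1z_2,\,z_1z_2$ all dividing generators) kills the $b_1$--$b_1$ interactions, precisely as in Theorem~\ref{sum 2 regular on power}, while condition $(e)$ (no path $b_1x_1,\,x_1x_2,\,x_2b_2$) kills the genuinely trinomial $b_1$--$b_2$ bridge produced by the $b_2^2$-terms. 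I expect the bookkeeping for these cross resultants, and the case analysis showing that every offending configuration is forbidden by $(d)$ or $(e)$, to be the technically delicate part of the argument; the rest is a routine application of Lemma~\ref{S-resultant tricks}.
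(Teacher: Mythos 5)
Your overall strategy coincides with the paper's own: reduce to a Gr\"obner basis computation for $(I^2,f)$ via Buchberger's algorithm, using Lemma~\ref{I, a+b+c, general} to describe $L=\ini(I,f)$ and Lemma~\ref{S-resultant tricks} for the individual $S$-pairs. Your preliminary reductions are correct and match Step~1 of the paper's proof: the containment $\langle b_0, L^2\rangle\subseteq \ini(I^2,f)$ via $(\ini(J))^2\subseteq \ini(J^2)$ and $(I,f)^2\subseteq(I^2,f)$ is sound, and the candidate leading terms you list ($b_1^4$, $b_1^3b_2$, $b_1^2b_2^2$, $b_1b_rM_k$, $M_jM_k$, $M_kXb_2^2$, $XYb_2^4$, and the terms from $Xb_2^2p_r$) are indeed each a product of two generators of $L$, modulo $b_0$.

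There is, however, a genuine gap: you stop exactly where the substance of the proof begins. The entire content of the lemma is the closure verification---that every $S$-resultant among the augmented set $G'=\{M_iM_j,\ p_rp_v,\ XYb_2^4,\ M_ip_r,\ M_iXb_2^2,\ Xb_2^2p_r,\ f\}$, where $p_r=b_r(b_1+b_2)$, reduces to zero modulo $G'$---and you explicitly defer it (``I expect the bookkeeping \ldots to be the technically delicate part''). The paper's Step~2 carries out this check in six families with numerous subcases, and hypotheses $(d)$ and $(e)$ are invoked at two specific, non-obvious junctures: in $S(M_iM_j, M_kp_r)$, when $M_i=b_1X$, $M_j=b_rZ$, and $M_k$ shares a variable $z_2$ with $X$ and a variable $z_1$ with $Z$, condition $(d)$ rules out the case $b_r=b_1$ and condition $(e)$ the case $b_r=b_2$; and in $S(M_ip_2, M_jXb_2^2)$ with $M_j=b_1Y$, condition $(d)$ is needed again, applied to variables dividing $\gcd(M_i,X)$ and $\gcd(M_i,Y)$. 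Note in particular that your heuristic attribution---$(d)$ kills the $b_1$--$b_1$ interactions while $(e)$ kills the bridge ``produced by the $b_2^2$-terms''---is not accurate: in the actual verification, $(e)$ enters through the pair $(M_iM_j,\,M_kp_2)$, an interaction of monomial generators of $I^2$ with a peeled element, whereas the cross-resultant genuinely involving a $b_2^2$-term, $S(M_ip_2, M_jXb_2^2)$, requires $(d)$, not $(e)$. Since an unverified Buchberger closure claim is precisely the step that could fail, and your own intuition mislocates where the hypotheses act, the proposal as written is a correct plan whose decisive step is missing rather than a proof of the lemma.
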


\begin{proof}
Let $k$ be the ground field and let $R_2=k[b_0,b_1,b_2]$. Let $R_1$ be the polynomial ring in the remaining variables; that is, $R=R_1[b_0,b_1,b_2]$. Let $f=b_0+b_1+b_2$. Let  $\G(I)=\{M_1, \dots, M_p\}$ and suppose, without loss of generality, that $M_1=b_0b_1$, $M_2=b_0b_2$, and $b_0\nmid M_i$ for all $i\ge 3$.

 For $r\in[2]$ set $p_{r}=b_r(b_1+b_2)$.
By Lemma~\ref{I, a+b+c, general},
$$G=\{M_i,p_{r}, Xb_2^2, f ~\big|~ i\in[p], r\in [2], b_1X\in \G(I)\}$$
is a Gr\"{o}bner basis for $(I,f)$.

\medskip
\noindent{\bf Claim:} A Gr\"{o}bner basis for $(I^2, f)$ is
$$G'=\{M_{i}M_j,  p_{r}p_{v}, XYb_2^4, M_ip_{r}, M_{i}Xb_2^2, Xb_2^2p_{r}, f ~\big|~  i,j\in[p], r,v\in[2], b_1X, b_1Y\in \G(I) \}.$$

\medskip

We note here that if $b_0 \mid X$, then the terms $M_iXb_2^2$ and $Xb_2^2p_r$ are multiples of other terms in $G'$. Moreover, if $b_0\mid X$ or $b_0 \mid Y$ and $b_0$ does not divide both, then we may assume without loss of generality that $b_0\mid X$ and $b_0 \nmid Y$. In this case, the term $XYb_2^4$ is a multiple of either $M_2Yb_2^2$. If $b_0\mid X, Y$, then the term $XYb_2^4$ is a multiple of  $M_1M_2$.  Hence we may assume that $b_0 \nmid X, Y$.

\noindent{\bf{Proof of Claim:}}  The proof proceeds in two steps.

\medskip
\noindent{\bf Step 1:} Starting with the generating set $\{M_iM_j, f\mid i,j\in[p]\}$ that generates $(I^2,f)$, we shall show that all elements in $G'$ arise as $S$-resultants when implementing Buchberger's algorithm, and thus are in the desired Gr\"{o}bner basis.

Following the algorithm, set $G_0=\{M_iM_j, f\mid i,j\in[p]\}$. For $r \in [2]$ and $i\in [p]$, we have
$$S(M_rM_i,f)=S(b_0b_rM_i,f)=\frac{\lcm(b_0b_rM_i,b_0)}{b_0}(b_1+b_2)=b_rM_i(b_1+b_2)=M_ip_r.$$
Since $M_ip_r$ does not reduce modulo $G_0$, then  $M_ip_r$ must be added to the set $G_0$ per the algorithm. Set
$$G_1=\{M_iM_j, M_ip_r, f\mid i,j\in[p]\}.$$ For $r,v\in [2]$, we have
\begin{eqnarray*}
S(M_rp_v, f)&=&\frac{\lcm(b_0b_rb_vb_1,b_0)}{b_1}b_2-\frac{\lcm(b_0b_rb_vb_1, b_0)}{b_0}(b_1+b_2)= b_rb_vb_0b_2-b_rb_vb_1(b_1+b_2).
\end{eqnarray*}
This can be further reduced by $b_rb_vb_2(b_0+b_1+b_2)$ to $$-b_rb_vb_2(b_1+b_2)-b_rb_vb_1(b_1+b_2)=-b_rb_v(b_1+b_2)^2=-p_{r}p_{v}.$$
Hence,
\begin{eqnarray}\label{-prpv}
S(M_rp_v,f) \text{ reduces to } p_rp_v.
\end{eqnarray}
Thus $p_rp_v$ must be added to the set. Set
$$G_2=\{M_iM_j, p_rp_v, M_ip_r,  f\mid i,j\in[p]\}.$$
Now, let $M=b_1X, N=b_1Y \in \G(I)$ and let $M_i\in \G(I)$. Then,
$$S(MM_i, M_ip_2)=S(b_1XM_i, M_ib_2(b_1+b_2))=\frac{\lcm(b_1XM_i, M_ib_1b_2)}{M_ib_1b_2}M_ib_2^2=M_iXb_2^2,$$
 which does not reduce modulo $G_2$. Hence $M_iXb_2^2$ must be added to $G_2$.
Let $$G_3=\{M_iM_j, p_rp_v,  M_ip_r, M_iXb_2^2, f\mid i,j\in[p]\}.$$
Now for  $M=b_1X, N=b_1Y \in \G(I)$ we have

$$S(MN, p_2^2)=S(b_1^2XY,b_2^2(b_1+b_2)^2)=\frac{\lcm(b_1^2XY, b_1^2b_2^2)}{b_1^2b_2^2}b_2^2(2b_1b_2+b_2^2)=XY(2b_1b_2^3+b_2^4),$$
which reduces to $XYb_2^4$, since $b_1XYb_2^2\in G_3$. Hence $M_iXb_2^2$ and $XYb_2^4$ must be added to the set. Thus we have
 $$G_4=\{M_iM_j, p_rp_v, XYb_2^4, M_ip_r, M_iXb_2^2, f\mid i,j\in[p]\}.$$
Finally  by Lemma~\ref{S-resultant tricks}~(b) we have,
$$S(M_rXb_2^2,f)=\frac{\lcm(b_0b_rXb_2^2,b_0)}{b_0}(b_1+b_2)=Xb_2^2b_r(b_1+b_2)=Xb_2^2p_r.$$
Adding $Xb_2^2p_r$ to the set yields
 $$G'=\{M_iM_j, p_rp_v, XYb_2^4, M_ip_r, M_iXb_2^2, Xb_2^2p_r, f\mid i,j\in[p]\}.$$

\medskip

\noindent{\bf Step 2:} In order to show that $G'$ is a Gr\"{o}bner basis, we must show that all the $S$-resultants of pairs of elements in $G'$ reduce to $0$ modulo $G'$. That is, we must show that the algorithm has stopped.

We need to consider $S$-resultants of the following types, for $i,j\in [p], r,v\in[2]$ and $X, Y$ such that $b_1X, b_1Y\in \G(I)$,
\begin{enumerate}
\item $S(M_iM_j, -)$
\item $S(p_{r}p_{v}, -)$
\item $S(XYb_2^4,-)$
\item $S(M_ip_{r}, -)$
 \item $S(M_iXb_2^2, -)$
\item  $S(Xb_2^2p_{r}, -)$.
\end{enumerate}

We shall examine these types of $S$-resultants one by one and show that they all reduce to 0 modulo $G'$.

\smallskip

\noindent{\bf \textit{Type 1: $S(M_iM_j, -)$.}}
Since $XYb_2^4$ and $M_iXb_2^2$ are monomial, there are four remaining pairings to consider. Observe that for a monomial $N$ and a polynomial $g$ we have, by  Lemma~\ref{S-resultant tricks}~(b),
\begin{eqnarray*}
S(M_iM_j,Ng)=\frac{\lcm(M_iM_j, N\ini(g))}{\ini(g)}(g-\ini(g)),\\
\end{eqnarray*}
and we we may assume that $\gcd(\ini(g), M_iM_j)\neq 1$.

\begin{itemize}
\item  $S(M_rM_i, f)=M_ip_r$ for $r\in [2]$, as seen above.

\item $S(M_iM_j, p_{k}p_{r})=S(M_iM_j,b_kb_r(b_1+b_2)^2)=\frac{\lcm(M_iM_j, b_kb_rb_1^2)}{b_1^2}(2b_1b_2+b_2^2)$. This is redundant modulo $G'$ if $\gcd(M_iM_j,b_1^2) = 1$. Assume that $\gcd(M_iM_j, b_1^2)\neq 1$. Without loss of generality we may assume that $M_i=b_1X$ and either $b_1\nmid M_j$ or $M_j=b_1Y$.

If $b_1\nmid M_j$ then, since $\gcd(b_kb_rb_1^2, X)=1$ by assumptions $($a$)$ and $($c$)$ and
$$S(M_iM_j, p_{k}p_{r})=\frac{\lcm(b_1XM_j, b_kb_rb_1^2)}{b_1^2}b_2(2b_1+b_2)=X\lcm(M_j, b_kb_r)b_2(2b_1+b_2).$$
Notice that $b_kb_r=b_1^2, b_1b_2$, or $b_2^2$. In any case, at most one of $b_k$ and $b_r$ divides $M_j$, so we may assume that $b_k\nmid M_j$. Then, $S(M_iM_j, p_{k}p_{r})$ is a multiple of $XM_jb_kb_2$, which is either $(b_1X)M_jb_2$ or $XM_jb_2^2$, and both reduce to 0 modulo $G'$.

If $M_i=b_1X$ and $M_j=b_1Y$, then
$$S(M_iM_j, p_{k}p_{r})=\lcm(XY, b_kb_r)b_2(2b_1+b_2)=XYb_kb_rb_2(2b_1+b_2).$$
Again, $b_kb_r$ can be either $b_1^2, b_1b_2,$ or $b_2^2$. In any of these cases, $S(M_iM_j,p_{k}p_r)$ reduces to 0 modulo $G'$, since $(b_1X)(b_1Y)$, $(b_1X)Yb_2^2$, and $XYb_2^4$ are all in $G'$.

\item $S(M_iM_j, M_kp_{r})=\frac{\lcm(M_iM_j, M_kb_rb_1)}{b_1}b_2$ for any $k\in[p]$. As before, we may assume that $\gcd(M_iM_j, b_1)\neq 1$. Without loss of generality, suppose that $M_i=b_1X$. Then,
$$S(M_iM_j, M_kp_{r})=\lcm(XM_j,M_kb_r)b_2.$$
If $b_r\nmid M_j$, then this resultant is a multiple of $XM_jb_rb_2$, which is redundant. If $M_j=b_rZ$ for some $Z$, then $S(M_iM_j, M_kp_{r})=\lcm(XZb_r, M_kb_r)b_2$. To this end, we consider two possibilities.

 If $\gcd(Z, M_k)=1$, then this resultant is a multiple of $b_rZM_k = M_jM_k$, which is redundant. Suppose then that $M_k=Z_1W$ for some $W$ and $Z_1\mid Z$ with $\gcd(Z,W)=1$. If $\gcd(W,X)=1$, then we have
 $$S(M_iM_j, M_kp_{r})=\lcm(XZb_r, ZWb_r)b_2=XZWb_rb_2=(ZW)(Xb_rb_2),$$ which is redundant since $M_iM_k \mid (ZW)(b_1X)$ and $M_kXb_2^2 \mid (ZW)Xb_2^2$. If $\gcd(W,X)\neq 1$, then there exists $z_2$ such that $z_2\mid W$ and $z_2\mid X$.
Then  $b_1z_2\mid M_i$, and for $z_1 \mid Z_1$, $z_1z_2\mid M_k$ and $b_rz_1\mid M_j$. If $b_r=b_1$, this contradicts assumption (d) and if $b_r=b_2$ this contradicts assumption (e).

\item $S(M_iM_j, Xb_2^2p_{r})=\frac{\lcm(M_iM_j, Xb_2^2b_rb_1)}{b_1}b_2$. We may assume that $\gcd(M_iM_j, b_1)\neq 1$ and $M_i=b_1Y$.
Then $S(M_iM_j, Xb_2^2p_{r})= \lcm(YM_j, Xb_2^2b_r)b_2$, which is a multiple of $YM_jb_2^2\in G'$, since $b_2^2\nmid M_j$.  This completes the arguments for type 1 $S$-resultants.
\end{itemize}

\smallskip

\noindent{\bf \textit{Type 2: $S(p_{r}p_{v}, -)$.}} We need to consider six different choices of the second argument of the $S$-resultant. Note that $r,v \in [2]$. In what follows, let $s,k \in [2].$

\begin{itemize}
\item $S(p_{r}p_{v}, p_{s}p_{k})=S(b_rb_v(b_1+b_2)^2, b_sb_k(b_1+b_2)^2)=0$, by Lemma~\ref{S-resultant tricks}~(d).

\item $S(p_{r}p_{v}, XYb_2^4)=\frac{\lcm(b_rb_vb_1^2, XYb_2^4)}{b_1^2}(2b_1b_2+b_2^2)$, by Lemma~\ref{S-resultant tricks}~(b),  which is clearly a multiple of $XYb_2^4 \in G'$.

\item $S(p_{r}p_{v}, M_ip_{s})=S(b_rb_v(b_1+b_2)^2, M_ib_s(b_1+b_2))=\frac{\lcm(b_rb_vb_1^2, M_ib_sb_1)}{b_1^2}b_2(b_1+b_2)$. This is in fact equal to $\frac{\lcm(b_rb_vb_1^2, M_ib_sb_1)}{b_1^2}p_2$.  If $b_1\nmid M_i$, then this is a multiple of $M_i p_2 \in G'$. If $b_1\mid M_i$, then $M_i=b_1X$. If $r=1$ or $v=1$ or $s=1$, then the $S$-resultant is a multiple of $M_ip_2$ again. If instead $r=s=v=2$, then the $S$-resultant becomes $b_2^2Xp_2\in G'$.

\item $S(p_{r}p_{v}, M_iXb_2^2)=\frac{\lcm(b_rb_vb_1^2, M_iXb_2^2)}{b_1^2}(2b_1b_2+b_2^2)$, by Lemma~\ref{S-resultant tricks}~(b). We again can assume that $\gcd(b_1^2, M_iXb_2^2)\neq 1$ and $M_i=b_1Y$. Then,  $$S(p_{r}p_{v}, M_iXb_2^2)=\lcm(b_rb_v, XYb_2^2)b_2(2b_1+b_2),$$ which is a multiple of $YXb_2^3(2b_1+b_2)$. The latter reduces to $0$ modulo $G'$, since  $b_1Y\in I$ and $YXb_2^4 \in G'$.

\item $S(p_{r}p_{v},Xb_2^2p_{k})=S(b_rb_v(b_1+b_2)^2, Xb_2^2b_k(b_1+b_2))=\frac{\lcm(b_rb_v, Xb_2^2b_k)}{b_1^2}b_2(b_1+b_2)$. This is equal to $\frac{\lcm(b_rb_v, Xb_2^2b_k)}{b_1^2}p_{2}$, which is clearly a multiple of $Xb_2^2p_2\in G'$.

\item  Finally,
 \begin{eqnarray*}
 S(p_{r}p_{v},f)&=&S(b_rb_v(b_1+b_2)^2, b_0+b_1+b_2)=b_0b_rb_v(2b_1b_2+b_2^2)-b_rb_vb_1^2(b_1+b_2)\\
 &=&b_0b_2b_rb_v(b_1+b_2)+b_0b_2b_rb_vb_1-b_rb_vb_1^2(b_1+b_2)\\
 &=&b_0b_2b_rp_{v}+b_0b_2b_rb_vb_1-b_rb_vb_1^2(b_1+b_2),
 \end{eqnarray*}
which reduces to $b_0b_2b_rb_vb_1-b_rb_vb_1^2(b_1+b_2)$. This can be further reduced by $b_rb_vb_1b_2(b_0+b_1+b_2)$ to $-b_rb_vb_1b_2(b_1+b_2)-b_rb_vb_1^2(b_1+b_2)=-b_rb_vb_1(b_1+b_2)^2=-b_1p_{r}p_{v}$.
\end{itemize}

\smallskip

\noindent{\bf \textit{Type 3: $S(XYb_2^4,-)$.}} For this type of $S$-resultants, there are only three remaining choices for the second argument to consider: $S(XYb_2^4,Zb_2^2p_{r})$, $S(XYb_2^4,M_ip_{r})$, and $S(XYb_2^4,f)$.

Let $N$ represent either $Zb_2^2$ or $M_i$. Then $S(XYb_2^4,Np_{r}) = \frac{\lcm(XYb_2^4,Nb_rb_1)}{b_1}{b_2}$, which is a multiple of $XYb_2^4 \in G'$. Thus, both $S(XYb_2^4,Zb_2^2p_{r})$ and  $S(XYb_2^4,M_ip_{r})$ reduce to 0 modulo $G'$. On the other hand, $S(XYb_2^4, f)$ is redundant since $b_0\nmid XYb_2^4$. 
\smallskip

\noindent{\bf \textit{Type 4: $S(M_ip_{r}, -)$.}} There are four choices remaining to be considered.

\begin{itemize}
	\item $S(M_ip_{r}, M_jp_{v})=S(M_ip_{r}, Xb_2^2p_{v})=0$, by Lemma~\ref{S-resultant tricks}~(d).
	
	 \item $S(M_ip_{r}, M_jXb_2^2)=\frac{\lcm(M_ib_rb_1, M_jXb_2^2)}{b_1}b_2$. As before, we may assume that  $M_j=b_1Y$ and thus
	 $$S(M_ip_{r}, M_jXb_2^2)=\frac{\lcm(M_ib_rb_1, b_1YXb_2^2)}{b_1}b_2=\lcm(M_ib_r, YXb_2^2)b_2.$$
	 If $b_r=b_1$, then this resultant is a multiple of $(b_1X)Yb_2^2$. If $b_r=b_2$, then this resultant becomes $\lcm(M_ib_2, XYb_2^2)b_2=\lcm(M_i,XYb_2)b_2^2$. If $\gcd(M_i, X)=1$, then this is a multiple of $M_iXb_2^2$. Hence we may assume that $\gcd(M_i,X)\neq 1$ and $\gcd(M_i,Y)\neq 1$.

Let $X_1=\gcd(M_i, X)$ and $Y_1=\gcd(M_i,Y)$. Suppose first that there exist variables $x_1,y_1$ such that $x_1\mid X_1$ and $y_1\mid Y_1$ with $x_1\neq y_1$. Then $b_1x_1\mid b_1X$, $b_1y_1\mid b_1Y$, and $x_1y_1\mid M_i$, a contradiction to condition~(d). Hence we may assume that there do not exist any such different $x_1, y_1$ as above. Then $X_1=Y_1$ and thus
the $S$-resultant  $\lcm(M_i, XYb_2)b_2^2$ is a multiple of $\frac{M_i}{X_1}XYb_2^2$ that we can rewrite as a multiple of $M_iXb_2^2$, since $X_1\mid Y$.
	
	\item $S(M_ip_{r}, f)=\frac{\lcm(M_ib_rb_1,b_0)}{b_1}b_2-\frac{\lcm(M_ib_rb_1, b_0)}{b_0}(b_1+b_2)$. If $b_0\nmid M_i$, then the first part is a multiple of $M_i$ and $b_0b_2$, whereas the second is a multiple of $M_ib_1(b_1+b_2)$. Suppose that $b_0\mid M_i$. Then, $M_i=b_0b_i$ and the $S$-resultant $S(M_ip_r,f)$ reduces to $p_ip_r$ as we showed in Equation~\eqref{-prpv}.
 \end{itemize}

 \smallskip

\noindent{\bf \textit{Type 5: $S(M_iXb_2^2, -)$.}} There are two choices remaining to be considered.

\begin{itemize}
	\item $S(M_iXb_2^2, Yb_2^2p_{r})=\frac{\lcm(Yb_2^2b_rb_1, M_iXb_2^2)}{b_1}b_2$. Again, we may assume that $M_i=b_1Z$. Since $b_r=b_1$ or $b_2$ and $b_1b_2 \nmid M_k$ for all $k\in [p]$, this $S$-resultant is a multiple of $b_rZXb_2^3$. It is easily seen that this is redundant.
 \item $S(M_iXb_2^2, f)=\frac{\lcm(M_iXb_2^2,b_0)}{b_0}(b_1+b_2)$. Again, we may assume that $M_i=b_0b_i$ and $i\in [2]$. Hence the $S$-resultant becomes $Xb_2^2b_i(b_1+b_2)=Xb_2^2p_{i} \in G'$.
\end{itemize}

\smallskip

\noindent{\bf \textit{Type 6: $S(Xb_2^2p_{i}, -)$.}} There are two choices remaining to be considered.

\begin{itemize}
	\item $S(Xb_2^2p_{r}, Yb_2^2p_{v})=0$,  by Lemma~\ref{S-resultant tricks}~(d).
	\item $S(Xb_2^2p_{r}, f)=\frac{\lcm(Xb_2^2b_rb_1, b_0)}{b_1}b_2-\frac{\lcm(Xb_2^2b_rb_1, b_0)}{b_0}(b_1+b_2)=Xb_2^2b_0b_rb_2-b_1Xb_2^2b_r(b_1+b_2)$, which is clearly redundant.
\end{itemize}

\medskip

Combining Steps 1 and 2 of the proof, we have established that $G'$ is a Gr\"{o}bner basis for $(I^2, f)$. The conclusion of Lemma~\ref{ini I^2 a+b+c, general} follows by observing that
$$\ini(I,f) = \langle b_0, M_i, b_1^2, b_1b_2, Xb_2^2 \mid i \ge 3, b_1X \in \G(I)\rangle$$
and $\ini(p_{1}^2)=b_1^4$, $\ini(p_{2}^2)=b_1^2b_2^2$, and $\ini(p_1p_2)=b_1^3b_2$.
\end{proof}

Lemma~\ref{ini I^2 a+b+c, general} can be applied repeatedly to obtain the next corollary.

\begin{corollary} \label{iterated a+b+c}
Let $I$ be a monomial ideal and let $t \in \NN$. Let $\{b_{i,0},b_{i,1}, b_{i,2}\}$ be disjoint sets of variables and let $>_{i}$  be a term order, for $1 \le i \le t$, such that $b_{i,0}>b_{i,1}>b_{i,2}$.
Suppose that for each $i$ the variables $b_{i,0}, b_{i,1}, b_{i,2}$ satisfy the hypotheses of Lemma~\ref{ini I^2 a+b+c, general}.
Let $I_i=\ini(I_{i-1}, b_{i,0}+b_{i,1}+b_{i,2})$, where $I_{0}=I$. Then $$\ini(\ldots (\ini( \ini(I^2, b_{1,0}+b_{1,1}+b_{1,2}), b_{2,0}+b_{2,1}+b_{2,2})),\ldots), b_{t,0}+b_{t,1}+b_{t,2})=\langle b_{1,0}, b_{2,0}, \ldots, b_{t,0}, I_{t}^2\rangle .$$

\end{corollary}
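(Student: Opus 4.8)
The plan is to argue by induction on $t$, using Lemma~\ref{ini I^2 a+b+c, general} as both the base case and the engine of the inductive step. Write $g_i = b_{i,0}+b_{i,1}+b_{i,2}$ and set $K_0 = I^2$, $K_i = \ini(K_{i-1}, g_i)$, so that the left-hand side of the asserted identity is $K_t$; I will show $K_t = \langle b_{1,0}, \ldots, b_{t,0}, I_t^2\rangle$. For $t = 1$ this is exactly Lemma~\ref{ini I^2 a+b+c, general} applied to $I = I_0$ with the variables $b_{1,0}, b_{1,1}, b_{1,2}$, since $I_1 = \ini(I, g_1)$. Here I read the hypothesis of the corollary as saying that, for each $i$, the triple $b_{i,0}, b_{i,1}, b_{i,2}$ satisfies conditions $(a)$--$(e)$ of Lemma~\ref{ini I^2 a+b+c, general} relative to $I_{i-1}$.

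For the inductive step, assume $K_{t-1} = \langle b_{1,0}, \ldots, b_{t-1,0}, I_{t-1}^2\rangle$. Because $b_{1,0}, \ldots, b_{t-1,0}$ are variables, a generating set for $K_{t-1}$ consists of these variables together with $\G(I_{t-1}^2)$, all monomials. I would then run Buchberger's algorithm on this set enlarged by $g_t$, with a term order refining $b_{t,0} > b_{t,1} > b_{t,2}$. The key point is that $b_{1,0}, \ldots, b_{t-1,0}$ are \emph{inert}: their presence neither introduces new Gr\"obner basis elements nor alters the reductions governing the computation for $(I_{t-1}^2, g_t)$. Granting this, the resulting Gr\"obner basis is $\{b_{1,0}, \ldots, b_{t-1,0}\}$ together with the basis $G'$ produced in Lemma~\ref{ini I^2 a+b+c, general} for $(I_{t-1}^2, g_t)$, whence
\[
K_t = \langle b_{1,0}, \ldots, b_{t-1,0}\rangle + \ini(I_{t-1}^2, g_t) = \langle b_{1,0}, \ldots, b_{t-1,0}, b_{t,0}, I_t^2\rangle,
\]
using Lemma~\ref{ini I^2 a+b+c, general} (valid since $b_{t,0}, b_{t,1}, b_{t,2}$ satisfy $(a)$--$(e)$ relative to $I_{t-1}$) together with $I_t = \ini(I_{t-1}, g_t)$. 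This is the desired identity.

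The inert-variable claim is the step I expect to require the most care, and I would establish it directly from Lemma~\ref{S-resultant tricks}. The only genuinely new $S$-resultants created by adjoining the $b_{k,0}$ are $S(b_{k,0}, b_{j,0})$, which vanish since the $S$-resultant of two monomials is zero, and $S(b_{k,0}, h)$ for $h$ ranging over the elements of $G'$. When $h$ is monomial, $S(b_{k,0}, h) = 0$ as well. When $h$ is one of the polynomial elements of $G'$ (such as $M_ip_r$, $p_rp_v$, or $Xb_2^2p_r$ in the notation of Lemma~\ref{ini I^2 a+b+c, general}), note that each term of $h$ is divisible by the same power of $b_{k,0}$, so $b_{k,0}$ either divides $h$ entirely or not at all; a short computation with Lemma~\ref{S-resultant tricks}$(b)$,$(c)$ in these two cases shows that $S(b_{k,0}, h)$ is a polynomial multiple of $b_{k,0}$ and hence reduces to $0$ modulo $b_{k,0}$. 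Since every $S$-resultant internal to $G'$ already reduces to $0$ modulo $G'$ by Lemma~\ref{ini I^2 a+b+c, general}, it still reduces to $0$ after enlarging the basis, so no further elements appear and the algorithm terminates with the claimed basis. A final bookkeeping point, which I would record but not belabor, is that conditions $(a)$--$(e)$ persist at each stage: condition $(b)$ and the non-existence conditions $(a),(c),(d),(e)$ for $b_{t,0}, b_{t,1}, b_{t,2}$ involve only the variables in their support together with auxiliary variables, and the disjointness of the sets $\{b_{i,0}, b_{i,1}, b_{i,2}\}$ guarantees they are unaffected by the generators of $I_{t-1}$ carrying the earlier variables $b_{k,0}$.
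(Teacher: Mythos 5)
Your induction is essentially the paper's own (unwritten) argument: the paper gives no proof, saying only that Lemma~\ref{ini I^2 a+b+c, general} ``can be applied repeatedly,'' and your base case, your inductive step via the lemma applied to $I_{t-1}$, and your inert-variable analysis (the $b_{k,0}$ contribute no new $S$-resultants, since $S$-pairs of monomials vanish and $S(b_{k,0},h)$ for the polynomial elements $h$ of $G'$ with $\gcd(b_{k,0},\ini(h))=1$ is a multiple of $b_{k,0}$ by Lemma~\ref{S-resultant tricks}(b), hence reduces to zero) are exactly the intended fleshing-out. Your stated reading of the hypotheses --- that each triple $b_{i,0},b_{i,1},b_{i,2}$ satisfies (a)--(e) \emph{relative to $I_{i-1}$} --- is the reading under which the corollary holds and under which the lemma can indeed be ``applied repeatedly,'' so the main body of your proof is complete and correct.

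One correction is needed, however: your closing bookkeeping remark is false as stated. Passing from $I_{i-1}$ to $I_i = \ini(I_{i-1}, b_{i,0}+b_{i,1}+b_{i,2})$ introduces, by Lemma~\ref{I, a+b+c, general}, the new generators $b_{i,1}^2$, $b_{i,1}b_{i,2}$, and $Xb_{i,2}^2$ for $X \in N(b_{i,1})$ (see also the description of $J$ in the proof of Theorem~\ref{sequence sum 3}). These carry the variables $b_{i,1}$ and $b_{i,2}$, not only $b_{i,0}$, so disjointness of the triples does \emph{not} guarantee that conditions (d) and (e) persist from $I$ to the later $I_{j}$. Concretely, if $b_{j,1}b_{i,1} \in \G(I)$ for some $j < i$ --- which disjointness of the triples permits --- then $b_{i,1}b_{j,2}^2 \in \G(I_j)$, and taking $z_1 = z_2 = b_{j,2}$ violates condition (d) for the $i$-th triple relative to $I_j$ even when it held relative to $I$. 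This failure is precisely why Theorem~\ref{sequence sum 3} imposes the additional hypothesis $\{b_{i,0},b_{i,1},b_{i,2}\} \cap N(b_{j,r}) = \emptyset$ for $j \neq i$, $r \in [2]$. Under your declared reading (hypotheses assumed at every stage) the remark is superfluous, so simply delete it; if instead the hypotheses were only assumed relative to $I$, the persistence claim would be a genuine gap that cannot be closed without an extra hypothesis of the Theorem~\ref{sequence sum 3} type.
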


The following remark gives an interpretation of the assumptions in Lemma~\ref{ini I^2 a+b+c, general} when we specialize to edge ideals of graphs.

\begin{remark}
Let $I$ be the edge ideal of a graph and let $b_0,b_1,b_2 \in I$ be variables as in Lemma~\ref{ini I^2 a+b+c, general}. Then the assumptions (a)-(e) mean that $b_0, b_1, b_2$ do not form a triangle, $b_1$ is not on any triangle, and $b_0,b_1,b_2$ are not part of a pentagon. If these assumptions fail, then the statement of Lemma~\ref{ini I^2 a+b+c, general} no longer holds, as seen in the next example.
\end{remark}

\begin{example}
Let $I=(x_1x_2,x_2x_3,x_3x_4,x_4x_5,x_1x_5)$ be the edge ideal of a pentagon in the polynomial ring $R=\mathbb{Q}[x_1,x_2,x_3,x_4,x_5]$.
Using Macaulay~2~\cite{M2} we can verify that $\ini(I^2, x_1+x_2+x_5)\neq (x_1, (\ini(I,x_1+x_2+x_5))^2)$, where $>$ is a term order such that $x_1>x_2>x_5$.
In particular, $x_4x_5, x_3x_5^2\in \ini(I,x_1+x_2+x_5)$ by Lemma~\ref{I, a+b+c, general} and thus $x_3x_4x_5^3\in (x_1, (\ini(I,x_1+x_2+x_5))^2)$. However, $x_3x_4x_5^2$ is a minimal generator of $\ini(I^2, x_1+x_2+x_5)$, but $x_3x_4x_5^2 \not\in (x_1, (\ini(I,x_1+x_2+x_5))^2)$.
\end{example}

We now arrive to the main theorem of this section. For a monomial ideal $I$ and a variable $x$, we define
$$N(x) = \{M ~\big|~ xM \in \mathcal{G}(I)\}.$$

\begin{theorem} \label{sequence sum 3}
Let $I$ be a monomial ideal and let $s \in \NN$. Let $\{b_{i,0},b_{i,1}, b_{i,2}\}$ be disjoint sets of variables, and let $>$  be a term order such that $b_{i,0}>b_{i,1}>b_{i,2}$.
Suppose that for each $i$ the variables $b_{i,0}, b_{i,1}, b_{i,2}$ satisfy the hypotheses of Lemma~\ref{ini I^2 a+b+c, general}. In addition, assume $\{b_{i,0},b_{i,1}, b_{i,2}\} \cap N(b_{j,r})=\emptyset$ for $j\neq i$, $r\in[2]$. Let $f_i=b_{i,0}+b_{i,1}+b_{i,2}$. Then $f_1, \ldots, f_s$ is an initially regular sequence on $R/I^2$.
\end{theorem}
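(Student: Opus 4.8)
The plan is to prove that $f_1, \dots, f_s$ is an initially regular sequence by unwinding the definition and reducing everything to the structural description of iterated initial ideals provided by Corollary~\ref{iterated a+b+c}. Recall that by definition I must show that each $f_i$ is regular on $R/I_i$, where $I_1 = \ini(I^2)$ and $I_i = \ini(I_{i-1}, f_i)$ for $i > 1$. The key observation is that Corollary~\ref{iterated a+b+c} computes precisely these iterated initial ideals: after processing $f_1, \dots, f_{i-1}$, the resulting ideal is $\langle b_{1,0}, \dots, b_{i-1,0}, I_{i-1}^2 \rangle$ where each $I_j = \ini(I_{j-1}, b_{j,0}+b_{j,1}+b_{j,2})$ is the single-power initial ideal from Lemma~\ref{I, a+b+c, general}. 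So the whole argument rests on showing that $f_i = b_{i,0}+b_{i,1}+b_{i,2}$ is regular on $R$ modulo this explicitly described monomial ideal.

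The first step is to verify that the hypotheses of Corollary~\ref{iterated a+b+c} are met at each stage, which follows from the standing assumption that each triple $\{b_{i,0}, b_{i,1}, b_{i,2}\}$ satisfies the hypotheses of Lemma~\ref{ini I^2 a+b+c, general} together with the disjointness of the triples. The more delicate point is that passing to $\ini(I_{i-1}, f_i)$ preserves the relevant conditions (a)--(e) for the \emph{next} triple. This is exactly where the extra hypothesis $\{b_{i,0}, b_{i,1}, b_{i,2}\} \cap N(b_{j,r}) = \emptyset$ for $j \neq i$, $r \in [2]$, must come into play: it guarantees that the new generators introduced by forming the initial ideal with respect to $f_j$ (the $\widehat{M}$ terms and the $\lcm(X,M')b_2^2$ terms from Lemma~\ref{I, a+b+c, general}) do not create forbidden configurations involving the later variables $b_{i,r}$. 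I would track carefully how each generator type of $I_j$ interacts with $b_{i,0}, b_{i,1}, b_{i,2}$, checking that conditions (a)--(e) survive; the $N(b_{j,r})$ disjointness should prevent $b_{i,1}$ or $b_{i,2}$ from becoming involved in a triangle, pentagon, or the prohibited $z_1 z_2$/$x_1 x_2$ patterns.

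The second and central step is the regularity check itself. Given the description $I_i = \langle b_{1,0}, \dots, b_{i-1,0}, H_i^2 \rangle$ where $H_i = \ini(I_{i-1}, \dots)$ is a monomial ideal obtained by the single-form construction, I would quotient out the variables $b_{1,0}, \dots, b_{i-1,0}$ (which is harmless since $f_i$ involves none of them, by disjointness) and reduce to showing $f_i$ is regular on $R'/H_i^2$ in the quotient ring $R'$. At this point the situation is precisely that of Theorem~\ref{sum 2 regular on power} generalized to the trinomial case, or more directly it follows from the Gröbner-basis structure established in Lemma~\ref{ini I^2 a+b+c, general}: the statement $\ini(H_i^2, f_i) = \langle b_{i,0}, (\ini(H_i, f_i))^2 \rangle$ shows that $b_{i,0}$ is a nonzerodivisor in the initial ideal, and since regularity can be detected on the initial ideal (by the general principle that $x$ is regular on $R/J$ if it is regular on $R/\ini(J)$, invoked via \cite[Corollary~2.6, Lemma~2.4]{FHM}), this yields the regularity of $f_i$ on $R/I_i$.

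The hard part will be the bookkeeping in the first step — confirming that conditions (a)--(e) of Lemma~\ref{ini I^2 a+b+c, general} are inherited through the successive initial-ideal operations. The subtlety is that $H_i$ is not the original $I$ but an initial ideal carrying the new $\widehat{M}$ and $\lcm(X,M')b_2^2$ generators from all earlier forms, and I must ensure none of these manufactured generators violates, say, condition (e) (the forbidden path $b_{i,1}x_1, x_1x_2, x_2b_{i,2}$) for the current triple. This is where the neighborhood-disjointness hypothesis is indispensable and where a careful case analysis on the shape of generators of $H_i$ is required; the rest of the proof is then a clean reduction to Lemma~\ref{ini I^2 a+b+c, general} and the transfer-of-regularity machinery from \cite{FHM}.
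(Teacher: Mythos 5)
Your frame is right --- reducing via Corollary~\ref{iterated a+b+c} to showing that $f_i$ is regular on $R/(b_{1,0},\dots,b_{i-1,0},I_{i-1}^2)\cong R'/J^2$, and worrying about whether conditions (a)--(e) are inherited by the intermediate ideals, both match the paper. But your central step is a non sequitur. You claim that regularity of $f_i$ on $R'/J^2$ follows from the identity $\ini(J^2,f_i)=\langle b_{i,0},(\ini(J,f_i))^2\rangle$ because ``regularity can be detected on the initial ideal.'' The actual transfer principle runs the other way: if $\ini(f)$ is regular on $R/\ini(J')$ then $f$ is regular on $R/J'$, since $\ini(J':f)\subseteq \ini(J'):\ini(f)$. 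Here $J^2$ is already monomial, so $\ini(J^2)=J^2$, and $\ini(f_i)=b_{i,0}$ is in general a \emph{zerodivisor} on $R'/J^2$: for instance $b_{i,0}\cdot(b_{i,1}u)=(b_{i,0}b_{i,1})u\in J^2$ for any $u\in J$, while $b_{i,1}u\notin J^2$ in general. Knowing a Gr\"obner basis of $(J^2,f_i)$ describes the quotient $R'/(J^2,f_i)$, but it says nothing directly about the colon $(J^2:f_i)$, which is what regularity means; Lemma~\ref{ini I^2 a+b+c, general} is needed to identify the iterated initial ideal, not to certify regularity. Your fallback --- that the situation is ``precisely that of Theorem~\ref{sum 2 regular on power} generalized to the trinomial case'' --- is also unavailable: no trinomial analogue of that theorem is proved anywhere in the paper, which is exactly why the conclusion of Theorem~\ref{sequence sum 3} is only ``initially regular'' rather than ``regular.''

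What the paper actually does at this point is the essential content you are missing. It writes $J$ down explicitly using Lemma~\ref{I, a+b+c, general}, namely $J=\langle M_1,\dots,M_r,\, b_{j,1}^2,\, b_{j,1}b_{j,2},\, Xb_{j,2}^2 \mid X\in N(b_{j,1}),\ j\in[i-1]\rangle$, observes that $J$ is \emph{not} squarefree, and therefore polarizes: $f_i$ is regular on $R'^{\pol}/J^{\pol}$ by \cite[Theorem 3.11]{FHM}, and the hypothesis $\{b_{i,0},b_{i,1},b_{i,2}\}\cap N(b_{j,r})=\emptyset$ together with conditions (a)--(e) guarantees $\{b_{i,0},b_{i,1},b_{i,2}\}$ lies in no $2$-saturating set of the hypergraph of $J^{\pol}$, so the Terai--Trung criterion (Theorem~\ref{TT thm}) gives $f_i\notin\wp$ for every $\wp\in\Ass(R'^{\pol}/(J^{\pol})^2)$. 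It then depolarizes associated primes via \cite[Corollary 2.6]{Far}: every $Q\in\Ass(R'/J^2)$ is the depolarization of some $Q'\in\Ass(R'^{\pol}/(J^{\pol})^2)$; from $f_i\notin Q'$ one gets some $b_{i,r}\notin Q'$, and since $b_{i,r}$ is not a polarization variable and $d_{b_{i,r}}(J)\le 1$ (by \cite[Lemma 3.7]{FHM}), $b_{i,r}\notin Q$, hence $f_i\notin Q$. This polarization / Terai--Trung / depolarization chain is the heart of the proof; without it, or some substitute argument establishing $(J^2:f_i)=J^2$, your proposal has a genuine gap at its decisive step.
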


\begin{proof} Set $I_i = \ini(I_{i-1}, f_i)$, where $I_0 = I$.
By Corollary~\ref{iterated a+b+c}, it suffices to show that $f_i$ is regular on $R/(b_{1,0}, \ldots, b_{i-1,0}, I_{i-1}^2)$. By Lemma~\ref{ini I^2 a+b+c, general}, there exists an ideal $J$ such that
$$R/(b_{1,0}, \ldots, b_{i-1,0}, I_{i-1}^2)\cong R'/J^2,$$
where $R'$ is a polynomial ring such that $R=R'[b_{1,0}, \ldots, b_{i-1,0}]$. Let $\mathcal{G}(I)=\{M_1, \ldots, M_p\}$ and suppose without loss of generality that for all $j\in [i-1]$ we have $b_{j,0}\nmid M_1, \ldots, M_r$ and $b_{j,0}\mid M_i$ for all $r+1\le i\le p$. Then by Lemma~\ref{I, a+b+c, general} we have the following description for the ideal $J$:
\begin{eqnarray*}
 J&=&I+\langle b_{j,1}^2,  b_{j,1}b_{j,2}, Xb_{j,2}^2 \mid X\in N(b_{j,1}), j\in[i-1] \rangle\\
&=&\langle M_1,\ldots, M_r, b_{j,1}^2, b_{j,1}b_{j,2}, Xb_{j,2}^2 \mid  X\in N(b_{j,1}), j\in[i-1]\rangle.
\end{eqnarray*}

By \cite[Lemma~3.7]{FHM} we also know that $d_{b_{i,u}}(J)\le 1$ for $ 0\le u \le 2$. Polarizing $J$, we obtain
$$J^{\pol}=\langle M_i^{pol},  b_{j,1}b_{j,1}', b_{j,1}b_{j,2}, X^{pol}b_{j,2}b_{j,2}' ~\big|~ i\in [r], X\in N(b_{j,1}), j\in[i-1] \rangle \subseteq R'^{\pol}.$$
Let $H$ be the hypergraph corresponding to $J^{\pol}$.

Notice that by the hypotheses, $f_i$ is regular on $R'^{\pol} /J^{\pol}$ by \cite[Theorem 3.11]{FHM} and $\{b_{i,0}, b_{i,1}, b_{i,2}\}$ is not in any 2-saturating set of the hypergraph $H$ of $J^{\pol}$. Hence, it follows from \cite[Theorem~3.1]{TT} that $f_i$ is regular on $R'^{\pol}/(J^{\pol})^2$. This means that $f_i\not\in \Ass(R'^{\pol}/(J^{\pol})^2)$.

We claim that $f_i\not\in \Ass(R'/J^{2})$. Consider an arbitrary $Q\in \Ass(R'/J^2)$. By \cite[Corollary~2.6]{Far}, we have $Q=Q'^{\rm{depol}}$, where $Q'\in \Ass(R'^{\pol}/(J^{\pol})^2)$ and $Q'^{\rm{depol}}$ is the depolarization of $Q'$. However, $f_i\not \in Q'$ means that there exists a value $r$ such that $b_{i,r}\not \in Q'$. Since $b_{i,r}\not\in \{b_{j,1}',b_{j,2}'\}$ for all $j\in[i-1]$ and $d_{b_{i,r}}(J) =1$, we get $b_{i,r}\not \in Q$. Hence, $f_i\not \in Q$ and thus $f_i$ is regular on $R'/J^2$.
\end{proof}

The next corollary allows us to combine the results from Theorems~\ref{sequences for square}, and~\ref{sequence sum 3} to obtain longer initially regular sequences.
\begin{corollary}\label{combine}
Let $I$ be a monomial ideal and let $s, t\in \NN$. For for $i\in [s]$ and $j\in [t]$ let $\{a_{i,0}, a_{i,1}\}$, $\{b_{j,0}, b_{j,1}, b_{j,2}\}$ be disjoint sets of variables and let $>$ be a term order such that $a_{i,0}>a_{i,1}$ and $b_{j,0}>b_{j,1}>b_{j,2}$ for all $i\in [s]$ and $j\in [t]$. For every $i\in [s]$ and $j\in [t]$ suppose that $\{a_{i,0}, a_{i,1}\}$ satisfy the hypothesis of Theorem~\ref{sequences for square} and that $\{b_{j,0}, b_{j,1}, b_{j,2}\}$ satisfy the hypotheses of Theorem~\ref{sequence sum 3}. In addition, assume that $\{a_{i,0}, a_{i,1}\}\cap N(b_{j,r})=\emptyset$  for every $i\in [s]$, $j\in[t]$, and $r\in [2]$.  Let $f_i=a_{i,0}+a_{i,1}$ and $g_j=b_{j,0}+b_{j,1}+b_{j,2}$. Then any sequence obtained from selecting terms from $f_1, \ldots, f_s$ and $g_1, \ldots, g_t$ is an initially regular sequence on $R/I^2$. In particular, $f_1, \ldots, f_s, g_1, \ldots, g_t$ is an initially regular sequence on $R/I^2$ and $\depth R/I^2\ge s+t$.
\end{corollary}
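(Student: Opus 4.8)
The plan is to argue by induction on the length of the chosen sequence, peeling off one form at a time and reducing each regularity check to an instance of Theorem~\ref{sum 2 regular on power} or of the argument behind Theorem~\ref{sequence sum 3}. Fix any selection $h_1, \dots, h_m$ drawn from the $f_i$ and $g_j$, in any order. For $0 \le \ell \le m$ I would write $H_\ell = \ini(I, h_1, \dots, h_\ell)$ (the iterated initial ideal, with $H_0 = I$) and let $V_\ell$ be the set of leading variables $a_{i,0}$ or $b_{j,0}$ of the forms already processed. The inductive hypothesis I would carry is twofold: first, the structural identity
$$\ini(I^2, h_1, \dots, h_\ell) = \langle V_\ell,\, H_\ell^2\rangle;$$
and second, that every not-yet-processed form still satisfies the hypotheses of Theorem~\ref{sum 2 regular on power} (if it is one of the $f_i$) or of Lemma~\ref{ini I^2 a+b+c, general} (if it is one of the $g_j$), now relative to $H_\ell$ in place of $I$.

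Granting this hypothesis, initial regularity at step $\ell$ is exactly the statement that $h_\ell$ is regular on $R/\langle V_{\ell-1}, H_{\ell-1}^2\rangle$. Since the variable sets are pairwise disjoint, $\supp(h_\ell)$ avoids $V_{\ell-1}$, so this ring is isomorphic to $R'/\overline{H_{\ell-1}}^{\,2}$ with $R' = R/(V_{\ell-1})$ and $\overline{H_{\ell-1}}$ the image of $H_{\ell-1}$. I would then finish as in the two theorems: for a binomial $h_\ell$, Theorem~\ref{sum 2 regular on power} applied to $\overline{H_{\ell-1}}$ gives regularity on $R'/\overline{H_{\ell-1}}^{\,2}$; for a trinomial $h_\ell$, I would polarize $\overline{H_{\ell-1}}$ and invoke the Terai--Trung criterion (Theorem~\ref{TT thm}) together with depolarization of associated primes, exactly as in the proof of Theorem~\ref{sequence sum 3}. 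To advance the structural half, I would mod out $V_{\ell-1}$ and apply the relevant initial-ideal description---\cite[Lemma~2.3]{FHM} for a binomial, Lemma~\ref{ini I^2 a+b+c, general} for a trinomial---both of which return the common shape
$$\ini(\overline{H_{\ell-1}}^{\,2}, h_\ell) = \langle v_\ell,\, (\ini(\overline{H_{\ell-1}}, h_\ell))^2\rangle,$$
where $v_\ell$ is the leading variable of $h_\ell$; lifting and using $\ini(\overline{H_{\ell-1}}, h_\ell) = \overline{H_\ell}$ (Lemma~\ref{I, a+b+c, general} and its binomial analogue) then gives $\langle V_\ell, H_\ell^2\rangle$. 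That both computations return the same shape is what lets binomials and trinomials be interleaved in any order, establishing the ``any selection'' assertion.

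The hard part will be the second clause of the inductive hypothesis: that passing from $I$ to $\overline{H_{\ell-1}}$ never destroys the forbidden-divisibility conditions required to process a later form. The generators of $\overline{H_{\ell-1}}$ arise from those of $I$ by exactly two operations---replacing a processed leading variable by its successor (the map $M \mapsto \widehat{M}$, substituting $a_{i,0}\mapsto a_{i,1}$ or $b_{j,0}\mapsto b_{j,1}$) and adjoining the squared monomials $Xb_{j,2}^2$ with $b_{j,1}X \in \G(I)$ coming from each processed trinomial $g_j$. I would show by a case analysis that, because the variable triples and pairs are disjoint and the hypotheses $\{a_{i,0},a_{i,1}\}\cap N(b_{j,r}) = \emptyset$ (and the corresponding ones from Theorem~\ref{sequence sum 3}) hold, neither operation can manufacture the pattern $b_1c,\,b_1d,\,cd$ forbidden by Theorem~\ref{sum 2 regular on power}, nor any configuration forbidden by conditions (a)--(e) of Lemma~\ref{ini I^2 a+b+c, general}; in particular the degree bound $d_{b_{l,u}}(\overline{H_{\ell-1}}) \le 1$ persists because the only raised powers live on the processed variables $b_{j,2}$, which are disjoint from the support of every unprocessed form (cf.\ \cite[Lemma~3.7]{FHM}). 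Any such forbidden configuration would force a still-unprocessed variable to sit in a minimal generator alongside a processed variable, contradicting disjointness or the $N$-hypotheses. This verification runs the condition-tracking of Theorems~\ref{sequences for square} and~\ref{sequence sum 3} simultaneously and is where all the extra hypotheses of the corollary are consumed.

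Once the induction is complete, every subsequence selected from $f_1, \dots, f_s, g_1, \dots, g_t$ is initially regular on $R/I^2$. Applying this to the full sequence of length $s+t$ and invoking the fact from \cite{FHM} that the length of an initially regular sequence bounds $\depth$ from below yields $\depth R/I^2 \ge s+t$.
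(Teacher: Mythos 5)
The paper states Corollary~\ref{combine} without a written proof, presenting it as an immediate consequence of the arguments for Theorems~\ref{sequences for square} and~\ref{sequence sum 3}, and your proposal is exactly that intended argument: an induction interleaving the binomial and trinomial steps, using that both \cite[Lemma~2.3]{FHM} and Lemma~\ref{ini I^2 a+b+c, general} produce the common shape $\langle V_\ell, H_\ell^2\rangle$, verifying hypothesis preservation via the disjointness and $N(b_{j,r})$-conditions together with conditions (a)--(e) applied to $I$, and concluding the depth bound from the initially-regular-sequence machinery of \cite{FHM}. Your plan is correct and essentially the same as the paper's implicit proof.
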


The next example illustrates the above corollary.

\begin{example}
Let $R=\mathbb{Q}[x_1, \ldots, x_{13}]$ be a polynomial ring and let $$I=(x_1x_2, x_2x_3, x_3x_4, x_4x_5, x_5x_6, x_4x_7, x_7x_8, x_8x_9, x_9x_{10}, x_{10}x_{11}, x_{11}x_{12}, x_{12}x_{13})$$
be the edge ideal of the graph depicted below.

\begin{center}
	\begin{tikzpicture}
		\tikzstyle{point}=[inner sep=0pt]
		\node (a)[point,label=above:$x_1$] at (-2,1) {$\bullet$};
		\node (b)[point,label=below:$x_2$] at (-2,0) {$\bullet$};
		\node (c)[point,label=below:$x_3$] at (-1,0) {$\bullet$};
		\node (d)[point,label=below:$x_4$] at (0,0) {$\bullet$};
		\node (e)[point,label=above:$x_5$] at (0,1) {$\bullet$};
		\node (f)[point,label=above:$x_6$] at (1,1) {$\bullet$};
		\node (g)[point,label=below:$x_7$] at (1,0) {$\bullet$};
		\node (h)[point,label=below:$x_8$] at (2,0) {$\bullet$};
		\node (j)[point,label=below:$x_9$] at (3,0) {$\bullet$};
		\node (i)[point,label=below:$x_{10}$] at (4,0) {$\bullet$};
		\node (k)[point,label=below:$x_{11}$] at (5,0) {$\bullet$};
		\node (l)[point,label=below:$x_{12}$] at (6,0) {$\bullet$};
		\node (m)[point,label=above:$x_{13}$] at (6,1) {$\bullet$};

		\draw (a.center) -- (b.center);
		\draw (b.center) -- (c.center);
		\draw (c.center) -- (d.center);
		\draw (d.center) -- (e.center);
		\draw (e.center) -- (f.center);
		\draw (d.center) -- (g.center);
		\draw (g.center) -- (h.center);
		\draw (j.center) -- (h.center);
		\draw (h.center) -- (i.center);
		 \draw (i.center) -- (k.center);
		 \draw (k.center) -- (l.center);
		 \draw (l.center) -- (m.center);

	\end{tikzpicture}
\end{center}

Let $f_1=x_1+x_2$, $f_2=x_{13}+x_{12}$ and $g_1=x_5+x_6+x_4$, $g_2=x_9+x_8+x_{10}$. Then any combination of $f_1, f_2, g_1, g_2$ gives an initially regular sequence on $R/I^2$ with respect to any order $>$ such that $x_1>x_2$, $x_{13}>x_{12}$, $x_5>x_6>x_4$, and $x_9>x_8>x_{10}$. This can be also verified using Macaulay~2~\cite{M2}. Moreover, $\depth R/I^2=4$,  while $\depth R/I=5$. For the first power our regular elements can be chosen to be closer together and hence by \cite[Theorem~3.11]{FHM} the sequence $x_1+x_2, x_6+x_5, x_{13}+x_{12}, x_7+x_4+x_8, x_{10}+x_9+x_{11}$ is an initially regular sequence on $R/I$ with respect to any order $>$ such that $x_1>x_2, x_6>x_5, x_{13}>x_{12}, x_7>x_4>x_8, x_{10}>x_{9}>x_{11}$.
\end{example}

In order to extend any of our results one would need to analyze the initial ideal of a monomial ideal and a linear sum of arbitrary length as well as the initial ideal of the square of the ideal with an arbitrary linear sum. This can get quite complicated even in the case of edge ideals. We are able to compute $\ini(I, b_0+\ldots +b_t)$ for an edge ideal, but $\ini(I^2, b_0+\ldots +b_t)$ is difficult to describe. The following example gives an idea of how complicated $\ini(I, b_0+\ldots+b_t)$ is, even when $I$ is the edge ideal of a graph.

\begin{example}
	Let $R=\mathbb{Q}[b_0,b_1,b_2,b_3,b_4, b_5,x_1,x_2,x_3,x_4]$ be a polynomial ring and let
	$$I=(b_0b_1,b_0b_2,b_0b_3,b_0b_4,b_0b_5,b_1x_1,b_2x_2,b_3x_3,b_4x_4)$$
	be the edge ideal of the graph depicted below.
	\begin{center}
		\begin{tikzpicture}
			\tikzstyle{point}=[inner sep=0pt]
			\node (a)[point,label=above:$b_0$] at (0,0) {$\bullet$};
			\node (b)[point,label=above:$b_1$] at (-1.5,0) {$\bullet$};
			\node (c)[point,label=above:$x_1$] at (-3,0) {$\bullet$};
			\node (d)[point,label=above:$b_2$] at (-1,1) {$\bullet$};
			\node (e)[point,label=above:$x_2$] at (-2,2) {$\bullet$};
			\node (f)[point,label=right:$b_3$] at (0,1.5) {$\bullet$};
			\node (g)[point,label=right:$x_3$] at (0,3) {$\bullet$};
			\node (h)[point,label=right:$b_4$] at (1,1) {$\bullet$};
			\node (i)[point,label=above:$x_4$] at (2,2) {$\bullet$};
			\node (j)[point,label=right:$b_5$] at (1.5,0) {$\bullet$};
			
			\draw (a.center) -- (b.center);
			\draw (b.center) -- (c.center);
			\draw (a.center) -- (d.center);
			\draw (d.center) -- (e.center);
			\draw (a.center) -- (f.center);
			\draw (f.center) -- (g.center);
			\draw (a.center) -- (h.center);
			\draw (h.center) -- (i.center);
			\draw (a.center) -- (j.center);
		\end{tikzpicture}
	\end{center}
Notice that $N(b_0)=\{b_1,b_2,b_3,b_4,b_5\}$ and $N(b_i)=\{x_i,b_0\}$ for all $i\in [4]$. Then using Macaulay 2~\cite{M2} we see that
	\begin{eqnarray*}
		\ini(I, b_0+b_1+b_2+b_3+b_4+b_5)&=&(b_0,b_1x_1,b_2x_2,b_3x_3,b_4x_4,b_1^2, b_1b_2,b_1b_3,b_1b_4,b_1b_5,\\
		&&x_1b_2(b_2,b_3,b_4,b_5),x_1x_2b_3(b_3,b_4,b_5),\\ && x_1x_2x_3b_4(b_4,b_5),x_1x_2x_3x_4b_5^2 ).
	\end{eqnarray*}
	
\end{example}

\section*{Conflict of Interest Statement}
On behalf of all authors, the corresponding author states that there is no conflict of interest.

\end{document}